\providecommand{\keywords}[1]
{
  \small	
  \textbf{\textit{Keywords---}} #1
}
\newtheorem{theorem}{Theorem}[section]
\newtheorem{lemma}[theorem]{Lemma}
\newtheorem{proposition}[theorem]{Proposition}
\newtheorem{remark}[theorem]{Remark}
\newtheorem{corollary}[theorem]{Corollary}
\newenvironment{proof}
  {{\bf Proof:}}
  {\qquad \hspace*{\fill} $\Box$}
\newcommand{\N}{\mathbb{N}}
\newcommand{\R}{\mathbb{R}}
\newcommand{\rmd}{\mathrm{d}}
\newcommand{\EC}{\mathcal{E}}
\newcommand{\fa}{\mathfrak{a}}
\newcommand{\FC}{\mathcal{F}}
\newcommand{\SC}{\mathcal{S}}
\newcommand{\MC}{\mathcal{M}}
\newcommand{\VC}{\mathcal{V}}
\newcommand{\tm}{\times}
\newcommand{\trn}{^{\top}}
\newcommand{\tr}{\mathrm{tr}}
\newcommand{\GL}{\mathrm{GL}}
\newcommand{\bary}{\mathrm{bar}}
\newcommand{\mydet}{\mathrm{det}}
\newcommand{\unit}{\mathds{1}}
\newcommand{\ep}{\varepsilon}
\newcommand{\vol}{\mathrm{vol}}
\newcommand{\Diag}{\mathrm{Diag}}
\DeclareMathOperator*{\argmin}{arg\,min}
\begin{document}

\title{A convex optimization approach to the Lyapunov exponents}
\author{Christoph Kawan\footnote{Email: christoph.kawan@gmx.de; no affiliation}\,\,\footnote{Acknowledgements: I express my sincere gratitude to Dr.~Nicola Cavallucci for his continuing interest in my paper and helping me to gain a better understanding of the space of Riemannian metrics and its geometry. Furthermore, special thanks go to the anonymous reviewers for their valuable comments that led to substantial improvements of the paper.}}
\date{}
\maketitle

\begin{abstract}
The aim of this paper is to shed more light on some recent ideas about Lyapunov exponents and clarify the formal structures behind these ideas. In particular, we show that the vector of averaged Lyapunov exponents of a smooth measure-preserving dynamical system can be regarded as the solution to a vector-valued optimization problem on a space $\MC$ of Riemannian metrics. Similar results were first proved by Jairo Bochi and Andr\'es Navas in the language of linear cocycles and their conjugacies. We go one step further and prove that the optimization problem is geodesically convex with respect to the $L^2$-metric on $\MC$. Moreover, we derive some consequences of this fact.
\end{abstract}

\keywords{Lyapunov exponents; Multiplicative Ergodic Theorem; Convex optimization; Space of Riemannian metrics}

\section{Introduction}

Originally introduced as stability indicators for ordinary differential equations, Lyapunov exponents have become a cornerstone of the theory of smooth dynamical systems. The main reason is that they are related in numerous ways to other important characteristics, in particular to several notions of entropy and dimensions of invariant sets as well as invariant measures. The study of their properties and methods for their computation is an ongoing endeavor. Already in dimension two, examples of dynamical systems are known whose Lyapunov exponents up to this date cannot be effectively estimated on large subsets of the state space. For instance, for the Chirikov standard map, an area-preserving diffeomorphism of the $2$-torus, it is a long-standing open problem to determine whether the maximal Lyapunov exponent is positive on a set of positive Lebesgue measure, see e.g.~\cite{BXY}.%

Lyapunov's first method for proving the asymptotic stability of a system aims at showing that the maximal Lyapunov exponent is negative. His second method uses a non-negative real-valued function on the state space with values decreasing along trajectories. Once a candidate for such a \emph{Lyapunov function} has been found, the decrease condition can be checked by a simple computation involving only the candidate function and the right-hand side of the system's equation. One benefit of this method thus consists in the reduction of the proof of a long-term property to a ``short-term computation''.%

Now, asymptotic stability is a comparatively simple property. Other properties, studied mainly in chaos theory, such as various forms of hyperbolicity, can most likely not be checked by evaluating a single scalar-valued function along trajectories. Since these properties are by nature multi-dimensional as their definitions involve a distinction of different state space directions, a structure which is itself multi-dimensional can help to verify them via simple short-term computations. The main example of such a structure is a Riemannian metric. Usually, a Riemannian metric which allows to verify a long-term property by a short-term computation is called an \emph{adapted metric}. For instance, a classical result states that any uniformly hyperbolic set of a discrete-time system admits an adapted metric (also called \emph{Lyapunov metric}) in which contraction on the stable bundle and expansion on the unstable bundle can be seen in one time step, see \cite[Ch.~6]{HKa}. An analogous result for the more general property of supporting a dominated splitting is proved in \cite{Gou}. In the context of nonuniformly hyperbolic systems, measurable adapted metrics can be constructed, see \cite[p.~668]{KHa}. Finally, in the context of stability analysis, besides Lyapunov functions, also Riemannian metrics are used to prove contractivity properties (implying asymptotic stability), which are known as \emph{contraction metrics}, see, e.g., the recent survey \cite{GHK}.%

In \cite{Boc,BNa}, Bochi and Navas introduced a novel method to construct Riemannian metrics adapted to continuous linear cocycles over topological dynamical systems. In this general context, they are not Riemannian metrics in the actual sense (as there is no smooth manifold involved), but rather continuous mappings assigning a positive-definite matrix to each point in the state space. They can then be interpreted as conjugacies between cocycles. In the special case of the derivative cocycle of a smooth map, they become Riemannian metrics in the actual sense, which allow to approximate the averaged Lyapunov vector (with respect to an invariant probability) by integration over the vector of (log-) singular values of the right-hand side Jacobian. In this paper, we provide a self-contained proof of this result in the context of the derivative cocycle. An important interpretation of the result is that the averaged Lyapunov vector is the infimum of a vector-valued function $\FC$ defined on the space of all continuous Riemannian metrics. Here, the infimum needs to be understood with respect to the so-called majorization order. It is important to note that it is not only a weak, i.e.~a Pareto infimum, but an infimum in all components. Hence, the averaged Lyapunov vector appears as the solution to a vector-valued optimization problem.%

We further prove that the optimization problem posed for $\FC$ is convex in a proper sense. First, one can restrict $\FC$ to the space $\MC$ of \emph{smooth} Riemannian metrics which is a Fr\'echet manifold modeled on the Fr\'echet space of smooth symmetric $(0,2)$-tensor fields. The most-studied Riemannian metric on this manifold is the $L^2$-metric, which is defined by integration over the fiberwise trace metric (the metric of the symmetric space of positive-definite matrices). The $L^2$-metric on $\MC$ has been studied for a long time in different contexts; the main references include \cite{Cav,Cla,Cl2,Ebi,FGr,GMi}. Though $\MC$ is only a weak Riemannian manifold, the central objects studied in Riemannian geometry (including geodesics) can be shown to exist and explicit formulas for them are available. An important submanifold of $\MC$ is the space $\MC_{\omega}$ of metrics inducing the same (arbitrary) volume form $\omega$. It was proved in \cite{FGr} that $\MC_{\omega}$ is a symmetric space with the inherited $L^2$-metric; in particular, it is geodesically complete. The function $\FC$ has the property that its infimum is preserved under restriction to $\MC_{\omega}$. Hence, the search for a (nearly) optimal metric can be restricted to $\MC_{\omega}$. We prove that the restriction of $\FC$ to $\MC_{\omega}$ is geodesically convex with respect to the $L^2$-geodesics on its domain and the majorization order on its co-domain. In fact, this proof was already given in \cite{KHG}, but without putting it in the proper context and relating it to the geometry of $\MC$.%

In summary, we prove that the averaged Lyapunov vector is the infimum of a geodesically convex vector-valued function. The associated optimization problem has some serious disadvantages, however. It is not guaranteed that a minimizer exists, the problem is infinite-dimensional, defined on a Riemannian manifold (rather than a vector space) and nonsmooth. We elaborate on some of these aspects and make suggestions how to mitigate the corresponding problems.%

Several other works have proposed convex optimization to compute Lyapunov exponents or associated quantities. In particular, we point the reader to the paper \cite{OGo}, where SOS programming is used to find tight bounds on the maximal Lyapunov exponent of a system given by an ordinary differential equation whose right-hand side is polynomial. This approach was generalized to the computation of sums of Lyapunov exponents and dimensions of attractors in \cite{PGo}. The optimization there is not over Riemannian metrics, but over auxiliary real-valued functions. The paper \cite{Ani} also studies adapted metrics for the estimation of (uniform) Lyapunov exponents, but on exterior bundles of the state space instead of the tangent bundle. This method is also used for obtaining effective dimension estimates for several classes of systems, and refined in \cite{ARo} to obtain estimates for topological entropy. We finally mention the papers \cite{Gea,KHG,Lea}, in which the groundwork has been laid for the approach pursued in this paper, and numerical algorithms were developed to use it for the computation of contraction metrics \cite{Gea}, the estimation of Lyapunov dimension \cite{Lea} and the estimation of restoration entropy \cite{KHG}.%

Organization of the paper: In Section \ref{sec_prelim}, we provide the necessary background on Lyapunov exponents. Section \ref{sec_bochi} is devoted to the fundamental result about the approximation of the Lyapunov vector by singular values of the time-one Jacobian, see Theorem \ref{thm_bochi}. In Section \ref{sec_riem_metrics}, we introduce the $L^2$-metric on the space of Riemannian metrics and present some results on the structure of this space. The subsequent Section \ref{sec_convexity} contains the convexity proof of the function $\FC$, Theorem \ref{thm_convexity}. In Theorem \ref{thm_lipschitz_l2}, we also prove the global Lipschitz continuity of the components of $\FC$. In Section \ref{sec_first_order}, we provide a formula for the directional derivatives of the components of $\FC$ in an idealized case, where the existence of these derivatives is guaranteed, see Theorem \ref{thm_derivative}. The final Section \ref{sec_final} provides a summary and an outlook.

\section{Preliminaries}\label{sec_prelim}

\subsection{Notation}

By $\log(\cdot)$, we denote the natural logarithm. If $A$ is a square matrix, we write $A\trn$ and $\tr(A)$ for the transpose and the trace of $A$, respectively. If $x \in \R^d$, then $\|x\|_p$ denotes its $p$-norm for $p \in \{1,2,\infty\}$. The components of $x$ will always be denoted by $x_1,\ldots,x_d$. By $\unit$, we denote the vector in $\R^d$ whose components are all equal to $1$. We write $\GL(d,\R)$ for the general linear group of $\R^d$. If $f:M \hookleftarrow$ is a differentiable self-map of a smooth manifold $M$, we write $fx$ and $\rmd f_x$ for its value and its derivative at $x \in M$, respectively. By $I$, we denote the identity matrix of any dimension. By a \emph{metric} $g$ on a smooth manifold $M$, we always mean a Riemannian metric, i.e.~the assignment of an inner product $g_x$ to each tangent space $T_xM$, which is at least continuous. We use the notation $\MC^0$ ($\MC$) for the space of all continuous (smooth) Riemannian metrics on a given manifold $M$.%

\subsection{Setup}

We consider a discrete dynamical system given by a $C^1$-diffeomorphism $f:M \hookleftarrow$ on a compact orientable\footnote{The orientability of $M$ is a technical assumption which guarantees that we can work with volume forms.} smooth manifold $M$ of finite dimension $d$. We further assume that $f$ preserves a Borel probability measure $\mu$, i.e.~$\mu(f^{-1}(A)) = \mu(A)$ for all Borel sets $A \subset M$.%

\subsection{Background on Lyapunov exponents}

Our objects of interest are the Lyapunov exponents associated with the data $(M,f,\mu)$. The existence of these numbers is guaranteed by the famous \emph{Multiplicative Ergodic Theorem (MET)}, also known as \emph{Oseledets Theorem}. This theorem includes quite a number of statements and can be formulated on different levels of generality. Here, we formulate a version that is adapted to our setup and our needs. First, we define the \emph{Lyapunov exponent} at a point $x \in M$ in direction $0 \neq v \in T_xM$ as the limit%
\begin{equation*}
  \lambda(x,v) := \lim_{n \rightarrow \infty}\frac{1}{n}\log\|\rmd f^n_xv\|,%
\end{equation*}
provided that this limit exists. Here, $\|\cdot\|$ denotes the norm (on each tangent space) induced by a given Riemannian metric. The value of $\lambda(x,v)$, however, is independent of the chosen metric as a consequence of compactness.%

\begin{theorem}{\bf (MET)}
In the given setup, for $\mu$-almost every $x \in M$ there exist numbers%
\begin{equation*}
  \lambda_1(x) \geq \lambda_2(x) \geq \cdots \geq \lambda_d(x)%
\end{equation*}
such that the following statements hold:%
\begin{enumerate}
\item[(i)] For every tangent vector $0 \neq v \in T_xM$, $\lambda(x,v) = \lambda_i(x)$ for some $i$.%
\item[(ii)] $\lambda_1(x) + \lambda_2(x) + \cdots + \lambda_d(x) = \lim_{n\rightarrow\infty}(1/n)\log|\det\rmd f^n_x|$.%
\item[(iii)] The functions $\lambda_i(\cdot)$ are Borel measurable and $f$-invariant.%
\item[(iv)] Let $\alpha_1(\rmd f^n_x) \geq \alpha_2(\rmd f^n_x) \geq \cdots \geq \alpha_d(\rmd f^n_x)$ denote the singular values of the linear operator $\rmd f^n_x:T_xM \rightarrow T_{f^nx}M$. Then the limit%
\begin{equation*}
  \lim_{n \rightarrow \infty}\frac{1}{n}\log\alpha_i(\rmd f^n_x)%
\end{equation*}
exists for each $i$ and equals $\lambda_i(x)$.%
\item[(v)] The convergence in (iv) also holds in the $L^1$-norm, i.e.%
\begin{equation*}
   \lim_{n \rightarrow \infty}\int \left|\frac{1}{n}\log\alpha_i(\rmd f^n_x) - \lambda_i(x)\right|\, \rmd\mu(x) = 0,\quad i = 1,2,\ldots,d.%
\end{equation*}
\end{enumerate}
If $\mu$ is ergodic, then the functions $\lambda_i(\cdot)$ are constant almost everywhere.
\end{theorem}

We base the analysis in this paper on statements (iv) and (v) above, i.e.~on the relation between the Lyapunov exponents and the growth rates of singular values. In contrast to eigenvalues, singular values are geometric quantities; they depend on the inner products on domain and co-domain of the linear operator in question. In our setup, this means that they depend on the Riemannian metric imposed on $M$, while their exponential growth rates, the Lyapunov exponents, do not. Hence, one can ask whether there exists a metric adapted to the given system such that in this metric the Lyapunov exponents can be computed (at least approximately) as the expansions rates in the first iterate, i.e.%
\begin{equation*}
  \lambda_i(x) \approx \log\alpha_i(\rmd f_x).%
\end{equation*}
The existence of such metrics essentially follows from a result proved in \cite[Prop.~4.1]{Boc} about linear cocycles over topological dynamical systems in combination with methods from \cite{BNa}. In the language of linear cocycles, switching to another metric corresponds to switching to a conjugate cocycle. For the convenience of the reader, we provide a complete proof of the result within the setup introduced above.

\section{Existence of adapted metrics}\label{sec_bochi}

We first need to introduce some technical concepts and the associated notation. Given an invertible linear operator $L:V \rightarrow W$ between finite-dimensional inner product spaces of dimension $d$, we write%
\begin{equation*}
  \alpha_1(L) \geq \alpha_2(L) \geq \cdots \geq \alpha_d(L)%
\end{equation*}
for the singular values of $L$. Geometrically, these are the lengths of the semi-axes of the ellipsoid $\EC := \{ Lv : \langle v,v \rangle_V = 1 \}$ measured in the norm induced by the inner product on $W$. We then write%
\begin{equation*}
  \vec{\sigma}(L) := (\log\alpha_1(L),\log\alpha_2(L),\ldots,\log\alpha_d(L)).%
\end{equation*}
By our assumption that $L$ is invertible, the numbers $\alpha_i(L)$ are strictly positive, and hence $\vec{\sigma}(L) \in \R^d$. We can be more precise. Because of the ordering of the singular values, $\vec{\sigma}$ takes values in the closed convex cone\footnote{The notation $\fa^+$ is adopted from the theory of semisimple Lie groups, where it is used for the positive Weyl chamber.}%
\begin{equation*}
  \fa^+ := \left\{ \xi \in \R^d : \xi_1 \geq \xi_2 \geq \cdots \geq \xi_d \right\}.%
\end{equation*}
A partial order on $\fa^+$, which is very useful for our purposes, is defined by%
\begin{equation*}
  \xi \preceq \eta \quad :\Leftrightarrow \quad \left\{\begin{array}{rl} \xi_1 + \cdots + \xi_k \leq \eta_1 + \cdots + \eta_k & \mbox{for } k = 1,\ldots,d-1, \\
	\xi_1 + \cdots + \xi_k = \eta_1 + \cdots + \eta_k & \mbox{for } k = d. \end{array}\right.%
\end{equation*}
This order is called the \emph{majorization order}. It can be shown that it is the partial order induced by the dual cone of $\fa^+$. If we relax the equality in the case $k = d$ to an inequality, we obtain the \emph{weak majorization order} that we denote by $\preceq_w$. We refer to the book \cite{MOA} for more information about majorization.%

In terms of the order $\preceq$, we can formulate \emph{Horn's inequality} (see \cite[Prop.~I.7.4.3]{BLR}) equivalently as%
\begin{equation}\label{eq_horn_inequality}
  \vec{\sigma}(L_1L_2) \preceq \vec{\sigma}(L_1) + \vec{\sigma}(L_2)%
\end{equation}
for any linear operators $L_1$ and $L_2$ that can be composed in the suggested way.%

Now, consider again our dynamical system $(M,f)$. Given \emph{any} Riemannian metric $g$ on $M$, the linear operator $\rmd f^n_x:T_xM \rightarrow T_{f^nx}M$ has unique singular values with respect to the inner products $g_x$ and $g_{f^nx}$. To highlight the dependence of these singular values on the metric $g$, we write $\vec{\sigma}^g(\rmd f^n_x) = \vec{\sigma}(\rmd f^n_x)$. By the MET, for $\mu$-almost every $x \in M$, the \emph{Lyapunov vector}%
\begin{equation*}
  \vec{\lambda}(x) := \lim_{n \rightarrow \infty}\frac{1}{n}\vec{\sigma}^g(\rmd f^n_x) \in \fa^+%
\end{equation*}
exists as a limit and equals the corresponding vector of Lyapunov exponents:%
\begin{equation*}
  \vec{\lambda}(x) = (\lambda_1(x),\lambda_2(x),\ldots,\lambda_d(x)).%
\end{equation*}
Observe that the almost-everyhwere existence of $\vec{\lambda}(x)$ already follows from Kingman's subadditive ergodic theorem, the subadditivity being a consequence of inequality \eqref{eq_horn_inequality}. We also introduce the average over the Lyapunov vector:%
\begin{equation*}
  \vec{\lambda}(f) := \int \vec{\lambda}(x)\, \rmd\mu(x).%
\end{equation*}
Observe that in the ergodic case the integral equals the integrand almost everywhere. As the Lyapunov exponents depend measurably on $x$, and the compactness of $M$ together with the continuity of $\rmd f$ easily implies their boundedness, the integral exists.%

We now prove our first result.%

\begin{proposition}\label{prop_lmin_prop1}
For every $g \in \MC^0$, the following inequality holds:%
\begin{equation*}
  \vec{\lambda}(f) \preceq \int \vec{\sigma}^g(\rmd f_x)\, \rmd\mu(x).%
\end{equation*}
\end{proposition}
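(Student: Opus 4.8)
The plan is to derive the inequality from Horn's inequality \eqref{eq_horn_inequality} via a subadditivity-plus-ergodic-averaging argument. The key observation is that for fixed $g \in \MC^0$, the sequence of functions $x \mapsto \vec{\sigma}^g(\rmd f^n_x)$ is subadditive along orbits in the majorization sense: applying \eqref{eq_horn_inequality} to the composition $\rmd f^{m+n}_x = \rmd f^m_{f^nx} \circ \rmd f^n_x$ gives
\begin{equation*}
  \vec{\sigma}^g(\rmd f^{m+n}_x) \preceq \vec{\sigma}^g(\rmd f^m_{f^nx}) + \vec{\sigma}^g(\rmd f^n_x).
\end{equation*}
Since each partial-sum functional $\xi \mapsto \xi_1 + \cdots + \xi_k$ is linear and monotone with respect to $\preceq$, iterating this bound $n$ times along the orbit of $x$ yields, for every $k \in \{1,\dots,d\}$,
\begin{equation*}
  \sum_{j=1}^k \left(\vec{\sigma}^g(\rmd f^n_x)\right)_j \;\leq\; \sum_{i=0}^{n-1} \sum_{j=1}^k \left(\vec{\sigma}^g(\rmd f_{f^ix})\right)_j,
\end{equation*}
which is the pointwise inequality I will integrate.

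Next I would integrate over $\mu$ and use $f$-invariance of $\mu$: each term $\int \sum_{j\le k}(\vec{\sigma}^g(\rmd f_{f^ix}))_j\,\rmd\mu(x)$ equals $\int \sum_{j\le k}(\vec{\sigma}^g(\rmd f_x))_j\,\rmd\mu(x)$, so after dividing by $n$ the right-hand side stays constant in $n$ and equals $\sum_{j\le k}\big(\int \vec{\sigma}^g(\rmd f_x)\,\rmd\mu(x)\big)_j$. On the left-hand side, by the MET (statement (v)) together with Kingman's subadditive ergodic theorem — whose hypotheses are met precisely because of the subadditivity noted above — the functions $(1/n)\sum_{j\le k}(\vec{\sigma}^g(\rmd f^n_x))_j$ converge pointwise $\mu$-a.e. to $\sum_{j\le k}\vec{\lambda}(x)_j$, and the dominated convergence theorem (using boundedness of $\rmd f$ and $\rmd f^{-1}$ by compactness, which bounds all $\log\alpha_i$ uniformly) lets me pass the limit inside the integral. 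This gives $\sum_{j\le k}\big(\int \vec{\lambda}(x)\,\rmd\mu(x)\big)_j \le \sum_{j\le k}\big(\int \vec{\sigma}^g(\rmd f_x)\,\rmd\mu(x)\big)_j$ for $k = 1,\dots,d$.

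It remains to upgrade the inequalities for $k \le d-1$ to the equality required for $k = d$ in the definition of $\preceq$. For $k = d$, the partial sum $\sum_{j=1}^d \vec{\sigma}^g(L)_j = \log|\det L|$ measured in the metric $g$; more precisely $\log\alpha_1 + \cdots + \log\alpha_d = \log\prod\alpha_i$ equals the logarithm of the Jacobian determinant of $\rmd f^n_x$ expressed through the volume densities of $g$ at $x$ and at $f^nx$. When we form $(1/n)\log|\det \rmd f^n_x|$, the metric-dependent density factors at the endpoints contribute a term that is $O(1/n)$ and hence vanishes in the limit, so $\sum_{j=1}^d \vec{\lambda}(x)_j = \lim_n (1/n)\log|\det\rmd f^n_x|$ independently of $g$ (this is exactly MET (iii)); likewise $\sum_{j=1}^d (\vec{\sigma}^g(\rmd f_x))_j = \log|\det\rmd f_x|$ up to the same density factors, and integrating against the $f$-invariant measure $\mu$ makes those factors cancel by the change-of-variables / invariance argument. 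Thus both sides of the $k=d$ relation equal $\int \log|\det\rmd f_x|\,\rmd\mu(x)$ (interpreted via any fixed metric), giving equality. The main obstacle I anticipate is this last bookkeeping: carefully tracking how the metric-dependent volume density enters $\sum_{j=1}^d\vec{\sigma}^g$ and verifying that the endpoint densities cancel under $\mu$-integration; everything else is a routine application of Kingman's theorem and dominated convergence.
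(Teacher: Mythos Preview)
Your proposal is correct and follows essentially the same route as the paper: subadditivity from Horn's inequality, invariance of $\mu$, dominated convergence for the limit, and the determinant identity for the $k=d$ equality. The only cosmetic difference is that the paper packages the subadditivity as Fekete's lemma applied to the already-integrated sequence $a_n = \int \sum_{i\le k}\log\alpha_i^g(\rmd f^n_x)\,\rmd\mu$, whereas you iterate Horn pointwise before integrating; your invocation of Kingman is redundant once MET~(v) is granted, and the ``density factor'' bookkeeping in your $k=d$ step is unnecessary if you work with $\det_g$ throughout and use its multiplicativity along the orbit, as the paper does.
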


\begin{proof}
The inequality to be proven is mainly a consequence of Fekete's subadditivity lemma. We write%
\begin{equation*}
  \lambda_i(f) := \int \lambda_i(x)\, \rmd\mu(x),\quad i = 1,2,\ldots,d.%
\end{equation*}
With this notation at hand, the inequality to be proven is equivalent to%
\begin{equation*}
  \lambda_1(f) + \cdots + \lambda_k(f) \leq \int \sum_{i=1}^k \log\alpha^g_i(\rmd f_x)\, \rmd\mu(x)%
\end{equation*}
for $k = 1,2,\ldots,d$ with equality if $k = d$. For a fixed $k$, consider the sequence%
\begin{equation*}
  a_n := \int \sum_{i=1}^k \log\alpha^g_i(\rmd f^n_x)\, \rmd\mu(x),\quad n = 1,2,\ldots%
\end{equation*}
The sequence is subadditive, which easily follows from Horn's inequality and the $f$-invariance of $\mu$. Hence, Fekete's lemma guarantees that $a_n/n$ converges to $\inf\{a_n/n : n \in \N \}$. This implies%
\begin{align*}
  a_1 &= \int \sum_{i=1}^k \log\alpha^g_i(\rmd f_x)\, \rmd\mu(x) \geq \lim_{n \rightarrow \infty}\frac{1}{n} \int \sum_{i=1}^k \log\alpha^g_i(\rmd f^n_x)\, \rmd\mu(x) \\
			&= \sum_{i=1}^k \int \lim_{n \rightarrow \infty}\frac{1}{n} \log\alpha^g_i(\rmd f^n_x)\, \rmd\mu(x) = \sum_{i=1}^k \int \lambda_i(x)\, \rmd\mu(x) = \sum_{i=1}^k \lambda_i(f).%
\end{align*}
Interchanging the order of limit and integral is possible by the dominated convergence theorem, using that%
\begin{equation}\label{eq_alpha_bounds}
  \min_{z\in M}\log\alpha_d^g(\rmd f_z) \leq \frac{1}{n}\log \alpha^g_i(\rmd f^n_x) \leq \max_{z\in M}\log\alpha_1^g(\rmd f_z)%
\end{equation}
for all $i$, $n$ and $x$. It remains to show the equality in the case $k = d$:%
\begin{align*}
  \sum_{i=1}^d \lambda_i(f) &= \lim_{n \rightarrow \infty}\frac{1}{n}\int \log |\mydet_g \rmd f^n_x|\, \rmd\mu(x) \\
	                          &= \lim_{n \rightarrow \infty}\frac{1}{n}\int \log \prod_{i=0}^{n-1} |\mydet_g \rmd f_{f^ix}|\, \rmd\mu(x) \\
													  &= \lim_{n \rightarrow \infty}\frac{1}{n}\sum_{i=0}^{n-1} \int \log|\mydet_g\rmd f_x|\, \rmd\mu(x) \\
														&= \int \log|\mydet_g\rmd f_x|\, \rmd\mu(x) = \int \sum_{i=1}^d \log\alpha^g_i(\rmd f_x)\, \rmd\mu(x).%
\end{align*}
The first of these equalities follows from statement (ii) of the MET, the second is the multiplicativity of the determinant together with the chain rule, the third uses the invariance of the measure $\mu$, the fourth is obvious, and the last one uses that the absolute determinant equals the product of the singular values.
\end{proof}

Our next goal is to prove a converse result showing that $\vec{\lambda}(f)$ can be approximated by vectors of the form $\int \vec{\sigma}^g(\rmd f_x)\, \rmd\mu(x)$. This result involves the construction of Riemannian metrics used to achieve the desired approximation. For this construction, we use geometric features of the space $\SC^+$ of positive-definite matrices. A good introduction to positive-definite matrices and the geometry of $\SC^+$ is the book \cite{Bha}. In the following, we present some basic facts.%

The space of symmetric $d \tm d$ matrices is $\SC_d := \{ s \in \R^{d \tm d} : s = s\trn \}$. It is a real vector space of dimension $\frac{d}{2}(d+1)$ and contains the open cone of positive-definite matrices $\SC_d^+ := \{ p \in \SC_d : p > 0 \}$. As an open subset, $\SC_d^+$ is a trivial smooth manifold of the same dimension as $\SC_d$. Its tangent space at any point $p \in \SC_d^+$ can be identified canonically with $\SC_d$. A very popular Riemannian metric on $\SC_d^+$, useful in many different contexts, is the \emph{trace metric}, given by%
\begin{equation*}
  \langle v,w \rangle_p := \tr(p^{-1}vp^{-1}w) \mbox{\quad for all\ } p \in \SC_d^+,\ v,w \in T_p\SC^+_d = \SC_d.%
\end{equation*}
Equipped with this metric, $\SC_d^+$ becomes a complete Riemannian manifold. By \cite[Form.~(6.14)]{Bha}, an explicit expression for the induced distance function is%
\begin{equation}\label{eq_splus_distance}
  d(p,q) = \Bigl(\sum_{i=1}^d \log^2\rho_i(p^{-1}q)\Bigr)^{\frac{1}{2}},%
\end{equation}
where $\rho_i(p^{-1}q)$ are the eigenvalues of $p^{-1}q$ (which is similar to $p^{-\frac{1}{2}}qp^{-\frac{1}{2}} \in \SC_d^+$, implying that the eigenvalues are real and positive).%

For each two points $p,q \in \SC^+_d$, there is a unique geodesic segment joining $p$ and $q$, denoted by $p \#_t\, q$ and parameterized on $[0,1]$. An explicit formula is%
\begin{equation*}
  p \#_t\, q = p^{\frac{1}{2}}\bigl(p^{-\frac{1}{2}}qp^{-\frac{1}{2}}\bigr)^t p^{\frac{1}{2}}, \quad t \in [0,1].%
\end{equation*}
If we instead specify a geodesic in $\SC^+_d$ by its initial point $p$ and its initial direction $v$, an explicit formula is%
\begin{equation*}
  \gamma_{p,v}(t) = p^{\frac{1}{2}}\exp(tp^{-\frac{1}{2}}vp^{-\frac{1}{2}})p^{\frac{1}{2}},\quad t \in \R.%
\end{equation*}
The group $\GL(d,\R)$ acts transitively on $\SC^+_d$ by isometries via%
\begin{equation*}
  a \ast p := apa\trn,\quad a \in \GL(d,\R),\ p \in \SC^+_d.%
\end{equation*}
Another isometry of $\SC^+_d$ is the matrix inversion. Using the fact that isometries map geodesics to geodesics, the following identities become obvious:%
\begin{align}\label{eq_splus_isometries_geodesics}
  a \ast (p \#_t\, q) = (a \ast p) \#_t\, (a \ast q) \mbox{\quad and \quad} (p \#_t\, q)^{-1} = p^{-1} \#_t\, q^{-1}.%
\end{align}
Another useful concept for the analysis on $\SC^+_d$ is a vectorial distance given by\footnote{Caution: There is an overload of notation here. We use the letter $d$ for the dimension of $M$, the distance function on $M$ and the vectorial distance on $\SC^+_d$. It is hopefully always clear from the context what is meant.}%
\begin{equation*}
  \vec{d}(p,q) := 2\vec{\sigma}(p^{-\frac{1}{2}}q^{\frac{1}{2}}),\quad p,q \in \SC^+_d.%
\end{equation*}

The vectorial distance is closely related to the distance \eqref{eq_splus_distance} induced by the trace metric. The next proposition summarizes some of its properties, from which we will only use (i) and (ii). See also \cite[Sec.~4]{Boc}.%

\begin{proposition}\label{prop_vecd}
The vectorial distance $\vec{d}(\cdot,\cdot):\SC^+_d \tm \SC^+_d \rightarrow \fa^+$ has the following properties:%
\begin{enumerate}
\item[(i)] $\vec{d}(I,p) = \vec{\sigma}(p)$ and $\|\vec{d}(p,q)\|_2 = d(p,q)$ for all $p,q \in \SC^+_d$.%
\item[(ii)] $\vec{d}(p_1,q_1) = \vec{d}(p_2,q_2)$ if and only if there exists $a \in \GL(d,\R)$ with $a \ast p_1 = p_2$ and $a \ast q_1 = q_2$.%
\item[(iii)] $\vec{d}(p,q) \preceq \vec{d}(p,r) + \vec{d}(r,q)$ for all $p,q,r \in \SC^+_d$.%
\item[(iv)] $\vec{d}(q,p) = i(\vec{d}(p,q))$ with $i(\xi) := -(\xi_d,\xi_{d-1},\ldots,\xi_1)$ for all $p,q \in \SC^+_d$.%
\end{enumerate}
\end{proposition}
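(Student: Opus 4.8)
The plan is to reduce all four statements to two elementary facts about singular values, combined with Horn's inequality \eqref{eq_horn_inequality} and the distance formula \eqref{eq_splus_distance}. The first fact is that for an invertible operator $L$ one has $\alpha_i(L^{-1}) = \alpha_{d+1-i}(L)^{-1}$, so that $\vec\sigma(L^{-1}) = i(\vec\sigma(L))$ with $i$ as in (iv). The second is that the singular values of $p^{-1/2}q^{1/2}$ are the square roots of the eigenvalues of $p^{-1}q$, because $(p^{-1/2}q^{1/2})\trn(p^{-1/2}q^{1/2}) = q^{1/2}p^{-1}q^{1/2}$ is similar to $p^{-1}q$; hence $\alpha_i(p^{-1/2}q^{1/2})^2 = \rho_i(p^{-1}q)$.

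Given these, (i) is immediate: for positive-definite $p$ the vector $\vec\sigma(p)$ collects the logarithms of the eigenvalues of $p$, and $p^{1/2}$ has eigenvalues $\rho_i(p)^{1/2}$, so $\vec d(I,p) = 2\vec\sigma(p^{1/2}) = \vec\sigma(p)$; moreover
\[
  \|\vec d(p,q)\|_2^2 = \sum_{i=1}^d \bigl(2\log\alpha_i(p^{-1/2}q^{1/2})\bigr)^2 = \sum_{i=1}^d \log^2\rho_i(p^{-1}q) = d(p,q)^2
\]
by \eqref{eq_splus_distance}. For (iii) one observes $p^{-1/2}q^{1/2} = (p^{-1/2}r^{1/2})(r^{-1/2}q^{1/2})$ and applies \eqref{eq_horn_inequality} to these two factors, then multiplies by $2$; note that the sum of two vectors of $\fa^+$ again lies in $\fa^+$, so both sides are legitimately compared in the majorization order. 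For (iv) one uses $q^{-1/2}p^{1/2} = (p^{-1/2}q^{1/2})^{-1}$ together with the reciprocal/reversal rule for singular values of the inverse and the linearity of $i$.

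The substantive part is (ii). For the ``if'' direction, the key is that $g \ast p = gpg\trn$ changes $p^{-1}q$ only by conjugation, $(g\ast p)^{-1}(g\ast q) = (g\trn)^{-1}(p^{-1}q)\,g\trn$, so the eigenvalues of $p^{-1}q$, and hence $\vec d(p,q)$, are invariant under the diagonal $\GL(d,\R)$-action; it is essential to argue via $p^{-1}q$ rather than via $p^{-1/2}q^{1/2}$, since the square root is not $\GL$-equivariant. For the ``only if'' direction I would first normalize: acting by $p_1^{-1/2}$, respectively $p_2^{-1/2}$, moves $(p_j,q_j)$ to $(I,\tilde q_j)$ with $\tilde q_j := p_j^{-1/2}q_jp_j^{-1/2} \in \SC^+_d$, without changing $\vec d$ by what was just shown. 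By (i), $\vec d(I,\tilde q_j) = \vec\sigma(\tilde q_j)$, so the hypothesis becomes $\vec\sigma(\tilde q_1) = \vec\sigma(\tilde q_2)$, i.e.\ $\tilde q_1$ and $\tilde q_2$ are positive-definite matrices with the same spectrum; by the spectral theorem there is an orthogonal $O$ with $O\tilde q_1 O\trn = \tilde q_2$, and since $O \ast I = I$, composing the three group elements gives $g := p_2^{1/2}Op_1^{-1/2}$ with $g \ast p_1 = p_2$ and $g \ast q_1 = q_2$.

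The main obstacle is bookkeeping rather than depth: one has to keep the left action clean against the non-equivariant choice of positive square root, which is why the argument is organized around the conjugacy class of $p^{-1}q$; and one has to recognize in (ii) that, after the $\GL$-normalization, equality of the vectorial distances is exactly equality of spectra — which is precisely what unlocks the orthogonal conjugacy.
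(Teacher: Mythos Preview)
Your proof is correct and complete. The paper does not actually prove Proposition~\ref{prop_vecd}; it only states the result and refers the reader to \cite[Sec.~4]{Boc}, so there is no in-paper argument to compare against. Your approach---reducing everything to the eigenvalues of $p^{-1}q$ via the similarity $q^{1/2}p^{-1}q^{1/2} \sim p^{-1}q$, invoking Horn's inequality for (iii), the singular-value reversal under inversion for (iv), and the normalize-then-orthogonally-conjugate argument for the nontrivial direction of (ii)---is exactly the standard route and is correct in all details.
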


Another important ingredient for the proof of the converse of Proposition \ref{prop_lmin_prop1} is the following concept. The \emph{barycenter} of $l$ matrices $p_1,\ldots,p_l \in \SC^+_d$ ($l \in \N$) is defined as%
\begin{equation*}
  \bary(p_1,\ldots,p_l) := \argmin_{p \in \SC^+_d}\sum_{i=1}^l d(p,p_i)^2.%
\end{equation*}
It can be shown that the minimizer exists and is unique, so $\bary(p_1,\ldots,p_l)$ is well-defined. The following properties are well-known, see, e.g., \cite{LLi}:%
\begin{itemize}
\item Symmetry: $\bary(p_1,\ldots,p_l) = \bary(p_{\sigma(1)},\ldots,p_{\sigma(l)})$ for any permutation $\sigma$.%
\item For any $a \in \GL(d,\R)$,%
\begin{equation*}
  a \ast \bary(p_1,\ldots,p_l) = \bary(a \ast p_1,\ldots,a \ast p_l).%
\end{equation*}
\item If $u = \bary(p_1,\ldots,p_{l-1},p_l)$ and $v = \bary(p_1,\ldots,p_{l-1},p_l')$, then%
\begin{equation}\label{eq_vecd_ineq}
  \vec{d}(u,v) \preceq \frac{1}{l}\vec{d}(p_l,p_l').%
\end{equation}
\item The mapping $(p_1,\ldots,p_l) \mapsto \bary(p_1,\ldots,p_l)$ is continuous.%
\end{itemize}

We need to lift some of the constructions introduced above from $\SC^+_d$ to the space $\MC^0$ of continuous metrics on $M$. If $F:M \hookleftarrow$ is a $C^1$-diffeomorphism and $g \in \MC^0$, then $F$ acts on $g$ by pulling back:%
\begin{equation*}
  (F^*g)_x(v,w) := g_{Fx}(\rmd F_xv, \rmd F_xw) \mbox{\quad for all\ } v,w \in T_xM.%
\end{equation*}
Since $F$ is $C^1$, we have $F^*g \in \MC^0$. We will see below that this can be regarded as an extension of the action of $\GL(d,\R)$ on $\SC^+_d$. We also note that for iterates of $F$ we write $F^{n*}g$ instead of the more cumbersome $(F^n)^*g$.%

Next, we extend the barycenter map to Riemannian metrics. Given $l$ metrics $g^1,\ldots,g^l$ in $\MC^0$, we can define a new metric, denoted by $\bary(g^1,\ldots,g^l)$, and defined pointwise via the barycenter of positive-definite matrices. This is done as follows. Let $(\phi,U)$ be any chart of $M$ around $x \in M$, i.e.~$U \subset M$ is an open neighborhood of $x$ and $\phi:U \rightarrow \phi(U) \subset \R^d$ a  diffeomorphism. Then we identify an element $p \in \SC^+_d$ with an inner product on $T_xM$ via\footnote{This identification is unusual because of the involved inversion of $p$. However, for our purposes it is more convenient as it allows to obtain more direct relations between the introduced operations on $\SC^+_d$ and corresponding operations on $\MC^0$.}%
\begin{equation}\label{eq_metric_matrix_rep}
  \beta_{\phi}(p)(v,w) := \langle p^{-1} \rmd\phi_xv,\rmd\phi_xw \rangle \mbox{\quad for all\ } v,w \in T_xM,%
\end{equation}
where $\langle\cdot,\cdot\rangle$ denotes the standard Euclidean inner product on $\R^d$. The map $\beta_{\phi}$ is obviously invertible. If $(\phi_1,U)$ and $(\phi_2,V)$ are two charts around $x$, then the two matrix representations of the inner product $g_x$ are related by%
\begin{equation}\label{eq_localrep_rel}
  \rmd(\phi_2 \circ \phi_1^{-1})_{\phi_1(x)} \ast \beta_{\phi_1}^{-1}(g_x) = \beta_{\phi_2}^{-1}(g_x),%
\end{equation}
which is an easy consequence of the definition in \eqref{eq_metric_matrix_rep}. From this and Proposition \ref{prop_vecd}(ii), it follows that the vectorial distance of two inner products $g^1_x$ and $g^2_x$ on $T_xM$ can be defined independently of the chosen chart:%
\begin{equation*}
  \vec{d}(g^1_x,g^2_x) := \vec{d}(\beta_{\phi}^{-1}(g^1_x),\beta_{\phi}^{-1}(g^2_x)).%
\end{equation*}
The barycenter of $l$ metrics $g^1,\ldots,g^l$ is now defined by%
\begin{equation}\label{eq_def_bary_for_metrics}
  \bary(g^1,\ldots,g^l)_x := \beta_{\phi}(\bary(\beta_{\phi}^{-1}(g^1_x),\ldots,\beta_{\phi}^{-1}(g^l_x))),%
\end{equation}
where for each $x \in M$ we choose an appropriate chart $(\phi,U)$ around $x$. Using \eqref{eq_localrep_rel}, it can easily be shown that this definition is independent of the chosen chart. From \eqref{eq_def_bary_for_metrics}, it follows that $\bary(g^1,\ldots,g^l)$ is continuous in the domain of every chart, since all involved functions on the right-hand side are continuous. Hence, $\bary(g^1,\ldots,g^l) \in \MC^0$.%

The proof of the following lemma is trivial, since all statements follow from a direct application of the definitions. Hence, we leave its proof to the reader.%

\begin{lemma}\label{lem_pullback}
For all $g,g^i \in \MC^0$ and $C^1$-diffeomorphisms $F,F_i:M \hookleftarrow$, the following properties hold:%
\begin{enumerate}
\item[(i)] $F^*\bary(g^1,\ldots,g^l) = \bary(F^*g^1,\ldots,F^*g^l)$.%
\item[(ii)] $F_2^* (F_1^*g) = (F_1 \circ F_2)^* g$.%
\item[(iii)] Let $(\phi,U)$ be a chart around $x$ and $(\psi,V)$ one around $Fx$. Then%
\begin{equation*}
  \beta_{\phi}^{-1}((F^*g)_x) = \rmd(\psi \circ F \circ \phi^{-1})_{\phi(x)}^{-1} \ast \beta_{\psi}^{-1}(g_{Fx}).%
\end{equation*}
In this sense, the pullback of a metric via a diffeomorphism is an extension of the action of $\GL(d,\R)$ on $\SC^+_d$.%
\item[(iv)] $\vec{d}(g^1_{Fx},g^2_{Fx}) = \vec{d}((F^* g^1)_x,(F^*g^2)_x)$ for all $x\in M$.
\end{enumerate} 
\end{lemma}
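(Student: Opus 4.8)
The plan is to establish the four statements in the order (ii), (iii), (iv), (i), treating (iii) as the workhorse from which (iv) and (i) follow. Statement (ii) is the most elementary: I would unwind the definition of the pullback twice and apply the chain rule. By definition,
\begin{align*}
  (F_2 \ast (F_1 \ast g))_x(v,w) &= (F_1 \ast g)_{F_2 x}\bigl(\rmd(F_2)_x v,\ \rmd(F_2)_x w\bigr) \\
  &= g_{F_1 F_2 x}\bigl(\rmd(F_1)_{F_2 x}\rmd(F_2)_x v,\ \rmd(F_1)_{F_2 x}\rmd(F_2)_x w\bigr),
\end{align*}
and since $\rmd(F_1 \circ F_2)_x = \rmd(F_1)_{F_2 x}\circ\rmd(F_2)_x$, the right-hand side equals $((F_1 \circ F_2)\ast g)_x(v,w)$.

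For (iii), the only thing requiring care is the inversion built into the identification $\beta_{\phi}$ of \eqref{eq_metric_matrix_rep}. Set $A := \rmd(\psi\circ F\circ\phi^{-1})_{\phi(x)} \in \GL(d,\R)$; the chain rule gives $\rmd\psi_{Fx}\circ\rmd F_x = A\circ\rmd\phi_x$. Writing $q := \beta_{\psi}^{-1}(g_{Fx})$, so that $g_{Fx}(a,b) = \langle q^{-1}\rmd\psi_{Fx}a,\,\rmd\psi_{Fx}b\rangle$ for $a,b\in T_{Fx}M$, I would compute for $v,w \in T_xM$
\begin{align*}
  (F\ast g)_x(v,w) = g_{Fx}(\rmd F_x v,\rmd F_x w) &= \langle q^{-1}A\,\rmd\phi_x v,\ A\,\rmd\phi_x w\rangle \\
  &= \langle A\trn q^{-1}A\,\rmd\phi_x v,\ \rmd\phi_x w\rangle.
\end{align*}
Comparing this with $\beta_{\phi}(p)(v,w) = \langle p^{-1}\rmd\phi_x v,\rmd\phi_x w\rangle$ and using that $\rmd\phi_x$ is invertible yields $p^{-1} = A\trn q^{-1}A$, i.e.~$p = A^{-1}q(A^{-1})\trn = A^{-1}\ast q$; this is exactly the asserted identity $\beta_{\phi}^{-1}((F\ast g)_x) = A^{-1}\ast\beta_{\psi}^{-1}(g_{Fx})$.

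Statement (iv) then follows immediately: applying (iii) to $g^1$ and $g^2$ with the same charts $(\phi,U)$ around $x$ and $(\psi,V)$ around $Fx$ gives $\beta_{\phi}^{-1}((F\ast g^i)_x) = A^{-1}\ast\beta_{\psi}^{-1}(g^i_{Fx})$, and hence, by the chart-independent definition of $\vec{d}$ on inner products together with Proposition \ref{prop_vecd}(ii) applied to $A^{-1}\in\GL(d,\R)$,
\begin{align*}
  \vec{d}\bigl((F\ast g^1)_x,(F\ast g^2)_x\bigr) &= \vec{d}\bigl(A^{-1}\ast\beta_{\psi}^{-1}(g^1_{Fx}),\ A^{-1}\ast\beta_{\psi}^{-1}(g^2_{Fx})\bigr) \\
  &= \vec{d}\bigl(\beta_{\psi}^{-1}(g^1_{Fx}),\ \beta_{\psi}^{-1}(g^2_{Fx})\bigr) = \vec{d}(g^1_{Fx},g^2_{Fx}).
\end{align*}

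Finally, for (i) I would combine (iii) with the $\GL(d,\R)$-equivariance of the matrix barycenter stated before the lemma. Fixing $x$, choosing charts $(\phi,U)$ around $x$ and $(\psi,V)$ around $Fx$, and putting $A := \rmd(\psi\circ F\circ\phi^{-1})_{\phi(x)}$, I would chain: part (iii) applied to $g = \bary(g^1,\ldots,g^l)$; the definition \eqref{eq_def_bary_for_metrics} of the metric barycenter at $Fx$ in the chart $\psi$; the equivariance $A^{-1}\ast\bary(\cdots) = \bary(A^{-1}\ast\cdots)$; part (iii) applied to each $g^i$; and \eqref{eq_def_bary_for_metrics} at $x$ in the chart $\phi$, obtaining
\begin{align*}
  \beta_{\phi}^{-1}\bigl((F\ast\bary(g^1,\ldots,g^l))_x\bigr) &= A^{-1}\ast\bary\bigl(\beta_{\psi}^{-1}(g^1_{Fx}),\ldots,\beta_{\psi}^{-1}(g^l_{Fx})\bigr) \\
  &= \bary\bigl(\beta_{\phi}^{-1}((F\ast g^1)_x),\ldots,\beta_{\phi}^{-1}((F\ast g^l)_x)\bigr) \\
  &= \beta_{\phi}^{-1}\bigl(\bary(F\ast g^1,\ldots,F\ast g^l)_x\bigr).
\end{align*}
Since $\beta_{\phi}^{-1}$ is a bijection and $x$ is arbitrary, (i) follows. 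There is no genuine obstacle here — everything reduces to the chain rule, the definitions of $\beta_{\phi}$, $\vec{d}$, and $\bary$, and the transformation rule \eqref{eq_localrep_rel} — and the one mildly delicate point is keeping the inversion convention in $\beta_{\phi}$ straight in part (iii), after which (i) and (iv) are purely formal.
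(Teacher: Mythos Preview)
Your proof is correct and is exactly the direct unwinding of definitions the paper has in mind; the paper itself does not give a proof, stating that all four statements follow trivially from the definitions and leaving the details to the reader. Your careful handling of the inversion convention in $\beta_{\phi}$ for part (iii), and the subsequent reduction of (i) and (iv) to (iii) combined with the $\GL(d,\R)$-equivariance of $\bary$ and Proposition~\ref{prop_vecd}(ii), is precisely the intended argument.
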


\begin{lemma}\label{lem_singvals}
Let $F:M \hookleftarrow$ be a $C^1$-diffeomorphism and $g \in \MC^0$. Then, for any $x \in M$ we have%
\begin{equation*}
  \vec{\sigma}^g(\rmd F_x) = \vec{\sigma}(p_2^{-\frac{1}{2}} A_x p_1^{\frac{1}{2}}),%
\end{equation*}
where $p_1 = \beta_{\phi}^{-1}(g_x)$, $p_2 = \beta_{\psi}^{-1}(g_{Fx})$ and $A_x = \rmd(\psi \circ F \circ \phi^{-1})_{\phi(x)}$ for charts $(\phi,U)$ and $(\psi,V)$ around $x$ and $Fx$, respectively.
\end{lemma}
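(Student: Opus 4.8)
The plan is to reduce the computation of singular values of $\rmd F_x$ with respect to the metrics $g_x$ and $g_{Fx}$ to a computation of singular values of a matrix acting between two copies of $\R^d$ equipped with the standard inner product, where the translation dictionaries are exactly the identifications $\beta_\phi$ and $\beta_\psi$. First I would recall what $\vec\sigma^g(\rmd F_x)$ means: it is $\vec\sigma$ of the linear operator $\rmd F_x : (T_xM, g_x) \to (T_{Fx}M, g_{Fx})$, i.e. the logarithms of the lengths of the semi-axes of the image under $\rmd F_x$ of the $g_x$-unit sphere in $T_xM$, measured in $g_{Fx}$. Singular values are invariant under pre- and post-composition with linear isometries, so I may replace $(T_xM,g_x)$ and $(T_{Fx}M,g_{Fx})$ by \emph{any} inner product spaces linearly isometric to them, provided I conjugate $\rmd F_x$ by the corresponding isometries.

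The key step is to produce these isometries explicitly from the charts. Using the identification \eqref{eq_metric_matrix_rep}, the map $\rmd\phi_x : (T_xM, g_x) \to \R^d$ carries $g_x = \beta_\phi(p_1)$ to the inner product $\langle p_1^{-1}\cdot,\cdot\rangle$ on $\R^d$; equivalently, $p_1^{-1/2}\rmd\phi_x : (T_xM,g_x)\to(\R^d,\langle\cdot,\cdot\rangle)$ is a linear isometry onto standard $\R^d$. Likewise $p_2^{-1/2}\rmd\psi_{Fx} : (T_{Fx}M,g_{Fx})\to(\R^d,\langle\cdot,\cdot\rangle)$ is a linear isometry. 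Conjugating $\rmd F_x$ by these two isometries, and using the chain rule in the form $\rmd\psi_{Fx}\circ\rmd F_x\circ(\rmd\phi_x)^{-1} = \rmd(\psi\circ F\circ\phi^{-1})_{\phi(x)} = A_x$, I get that $\vec\sigma^g(\rmd F_x)$ equals $\vec\sigma$ of the composite
\begin{equation*}
  (\R^d,\langle\cdot,\cdot\rangle) \xrightarrow{\ (p_1^{-1/2}\rmd\phi_x)^{-1}\ } (T_xM,g_x) \xrightarrow{\ \rmd F_x\ } (T_{Fx}M,g_{Fx}) \xrightarrow{\ p_2^{-1/2}\rmd\psi_{Fx}\ } (\R^d,\langle\cdot,\cdot\rangle),
\end{equation*}
which simplifies to $p_2^{-1/2}\,\rmd\psi_{Fx}\,\rmd F_x\,(\rmd\phi_x)^{-1}\,p_1^{1/2} = p_2^{-1/2} A_x p_1^{1/2}$. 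Since this last map is between two standard Euclidean spaces, its $\vec\sigma$ is the usual $\vec\sigma$ of the matrix, giving the claimed formula.

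The only point requiring care — and the place where the unusual inversion in \eqref{eq_metric_matrix_rep} matters — is checking that $p_1^{-1/2}\rmd\phi_x$ really is a $g_x$-isometry: one must verify $g_x(v,w) = \langle p_1^{-1}\rmd\phi_x v, \rmd\phi_x w\rangle = \langle p_1^{-1/2}\rmd\phi_x v, p_1^{-1/2}\rmd\phi_x w\rangle$, which is immediate from symmetry of $p_1^{-1/2}$ and \eqref{eq_metric_matrix_rep}, but it is worth spelling out so the square roots land on the correct side. I expect no genuine obstacle here; the content is purely a bookkeeping exercise with the definitions, together with the standard fact that singular values are unchanged under composition with isometries. (One could alternatively phrase the argument via Lemma \ref{lem_pullback}(iii), rewriting $\vec\sigma^g(\rmd F_x)$ in terms of the pulled-back metric, but the direct isometry argument above is the most transparent route.)
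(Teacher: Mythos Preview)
Your proposal is correct and complete; the only ingredient you invoke without proof---that singular values are unchanged under composition with linear isometries---is genuinely standard and requires no further justification here.

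The paper, however, does not argue via isometries. Instead it computes the adjoint $(\rmd F_x)^*$ explicitly from the defining relation $g_{Fx}(\rmd F_xv,w)=g_x(v,(\rmd F_x)^*w)$, obtains a local matrix expression for $\rmd F_x(\rmd F_x)^*$, and then observes that this matrix is similar to $B_xB_x^\top$ with $B_x=p_2^{-1/2}A_xp_1^{1/2}$, so that the eigenvalues of $(B_xB_x^\top)^{1/2}$---i.e.~the ordinary singular values of $B_x$---are exactly the $g$-singular values of $\rmd F_x$. Your route is shorter and more conceptual: once the maps $p_i^{-1/2}\rmd\phi$ are recognized as isometries, there is nothing left to compute. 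The paper's route is more hands-on and, as a by-product, yields an explicit formula for the adjoint and for $\rmd F_x(\rmd F_x)^*$ in local coordinates, which is reused later (e.g.~in Step~5 of the proof of Theorem~\ref{thm_derivative}). Both arguments hinge on the same identification \eqref{eq_metric_matrix_rep} and the symmetry of $p_i^{\pm 1/2}$, so the ``unusual inversion'' you flag is indeed the only place one must be careful in either approach.
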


\begin{proof}
The singular values of $\rmd F_x:T_xM \rightarrow T_{Fx}M$ with respect to the metric $g$ are the square roots of the eigenvalues of the self-adjoint operator%
\begin{equation*}
  (\rmd F_x)(\rmd F_x)^*:T_{Fx}M \rightarrow T_{Fx}M.%
\end{equation*}
Using the charts $(\phi,U)$ and $(\psi,V)$, we have for any $v \in T_xM$, $w \in T_{Fx}M$ that%
\begin{align*}
  &g_{Fx}(\rmd F_x v, w) = \langle \beta_{\psi}^{-1}(g_{Fx})^{-1} \rmd\psi_{Fx}\rmd F_x v, \rmd\psi_{Fx}w \rangle \\
	                      &= \langle \rmd\psi_{Fx}\rmd F_x v, [\beta_{\psi}^{-1}(g_{Fx})^{-1}]\trn \rmd\psi_{Fx}w \rangle \\
												&= \langle \rmd\phi_x v, [\rmd\psi_{Fx}\rmd F_x (\rmd\phi_x)^{-1}]\trn [\beta_{\psi}^{-1}(g_{Fx})^{-1}]\trn \rmd\psi_{Fx} w \rangle \\
      									&= \langle \beta_{\phi}^{-1}(g_x)^{-1} \rmd\phi_x v, \beta_{\phi}^{-1}(g_x) [\rmd\psi_{Fx}\rmd F_x (\rmd\phi_x)^{-1}]\trn \beta_{\psi}^{-1}(g_{Fx})^{-1} \rmd\psi_{Fx} w \rangle.%
\end{align*}
In the last identity, we used, in particular, that the matrix $\beta_{\phi}^{-1}(g_x)$ is symmetric. On the other hand,%
\begin{align*}
  g_x(v,(\rmd F_x)^* w) = \langle \beta_{\phi}^{-1}(g_x)^{-1} \rmd\phi_x v, \rmd\phi_x (\rmd F_x)^* w \rangle.%
\end{align*}
When we compare the two results, we see that%
\begin{equation*}
  (\rmd F_x)^* = \rmd\phi_x^{-1}\beta_{\phi}^{-1}(g_x) [\rmd\psi_{Fx}\rmd F_x (\rmd\phi_x)^{-1}]\trn \beta_{\psi}^{-1}(g_{Fx})^{-1} \rmd\psi_{Fx}.%
\end{equation*}
In the middle of the right-hand side term, we find the transpose of the local representation of $\rmd F_x$, that we denote by $A_x$. Then%
\begin{align*}
  \rmd F_x (\rmd F_x)^* = \rmd\psi_{Fx}^{-1} A_x \beta_{\phi}^{-1}(g_x) A_x\trn \beta_{\psi}^{-1}(g_{Fx})^{-1} \rmd\psi_{Fx}.%
\end{align*}
It follows that the singular values of $\rmd F_x$ with respect to the metric $g$ are the square roots of the eigenvalues of the matrix $C_x := A_x \beta_{\phi}^{-1}(g_x) A_x\trn \beta_{\psi}^{-1}(g_{Fx})^{-1}$ which is similar to%
\begin{equation*}
  [p_2^{-\frac{1}{2}} A_x p_1^{\frac{1}{2}}] [p_1^{\frac{1}{2}} A_x\trn p_2^{-\frac{1}{2}}] = B_xB_x\trn, \quad B_x := p_2^{-\frac{1}{2}} A_x p_1^{\frac{1}{2}}.%
\end{equation*}
It follows that the singular values of $\rmd F_x$ in the metric $g$ are the eigenvalues of $(B_xB_x\trn)^{\frac{1}{2}}$, or equivalently, the ordinary singular values of $B_x$, which directly implies the statement of the lemma.
\end{proof}

Now, we can finally prove our converse result.%

\begin{proposition}\label{prop_lmin_prop2}
For any $\ep>0$, there exists a metric $g^{\ep} \in \MC^0$ such that%
\begin{equation*}
  \int \vec{\sigma}^{g^{\ep}}(\rmd f_x)\, \rmd\mu(x) \preceq_w \vec{\lambda}(f) + \ep\unit.%
\end{equation*}
\end{proposition}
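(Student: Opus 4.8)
The plan is to construct, for a given $n \in \N$ chosen large enough that $(1/n)\int \vec{\sigma}^g(\rmd f^n_x)\,\rmd\mu(x)$ is within $\ep/2$ of $\vec{\lambda}(f)$ in some suitable sense (using the MET together with the dominated convergence bounds \eqref{eq_alpha_bounds} exactly as in the proof of Proposition \ref{prop_lmin_prop1}), a new metric $g^{\ep}$ obtained by a ``telescoping/averaging'' procedure from the fixed reference metric $g$ and its pullbacks under $f, f^2, \ldots, f^{n-1}$. Concretely, starting from any reference metric $g \in \MC^0$, I would set
\begin{equation*}
  g^{\ep} := \bary(g,\, f\ast g,\, f^2\ast g,\, \ldots,\, f^{n-1}\ast g)
\end{equation*}
(or a variant thereof). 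The point of this definition is that $f \ast g^{\ep}$ and $g^{\ep}$ differ only in the ``first'' and ``last'' slot of the barycenter — $f\ast g^{\ep} = \bary(f\ast g, f^2\ast g,\ldots, f^n\ast g)$ by Lemma \ref{lem_pullback}(i) — so the barycenter stability estimate \eqref{eq_vecd_ineq} gives $\vec{d}(g^{\ep}_x, (f\ast g^{\ep})_x) \preceq \frac{1}{n}\vec{d}((f^0\ast g)_x, (f^n\ast g)_x)$, a quantity controlled by $\frac1n$ times something bounded (since $M$ is compact and $g$, $f$ are continuous, $\vec{d}(g_y, (f^n\ast g)_y)$ grows at most linearly in $n$ by the triangle-type inequality Proposition \ref{prop_vecd}(iii) applied $n$ times, each step bounded uniformly on $M$). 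This is the heart of the argument: the metric $g^{\ep}$ is \emph{almost} invariant under pullback by $f$.

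Next I would translate ``almost invariance under pullback'' into the desired estimate on singular values. By Lemma \ref{lem_singvals}, $\vec{\sigma}^{g^{\ep}}(\rmd f_x)$ is the $\vec{\sigma}$ of a product $p_2^{-1/2} A_x p_1^{1/2}$ where $p_1, p_2$ are the local matrix representations of $g^{\ep}_x$ and $g^{\ep}_{fx}$. Comparing with the singular values computed using the \emph{invariant-if-it-existed} identity, one uses Horn's inequality \eqref{eq_horn_inequality} together with Lemma \ref{lem_pullback}(iv) and the relation between $\vec{d}$ and $\vec{\sigma}$ (Proposition \ref{prop_vecd}(i)–(ii)) to obtain, for each $x$,
\begin{equation*}
  \vec{\sigma}^{g^{\ep}}(\rmd f_x) \preceq \tfrac1n\vec{\sigma}^g(\rmd f^n_x) + (\text{correction term bounded by } \tfrac{C}{n}\unit \text{ in } \|\cdot\|_\infty),
\end{equation*}
where the correction comes from the $\vec{d}(g^{\ep}_x,(f\ast g^{\ep})_x)$ estimate above. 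The algebra here is the main bookkeeping: one writes $\rmd f^n_x$ as a composition of $n$ copies of $\rmd f$ at successive points, expresses each factor in the $g^{\ep}$-metric via Lemma \ref{lem_singvals}, and telescopes; the ``almost invariance'' makes the intermediate $p$-factors nearly cancel, and Horn's inequality absorbs the mismatch. Integrating over $\mu$ and using $f$-invariance of $\mu$ then yields $\int \vec{\sigma}^{g^{\ep}}(\rmd f_x)\,\rmd\mu(x) \preceq \frac1n \int \vec{\sigma}^g(\rmd f^n_x)\,\rmd\mu(x) + \frac{C}{n}\unit$, and choosing $n$ large makes both the first term close to $\vec{\lambda}(f)$ (by the MET / Fekete argument) and the second term smaller than $\ep/2$.

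The main obstacle I anticipate is handling the ``$\preceq$'' versus ``$\preceq_w$'' distinction cleanly and keeping the error terms uniform over $x \in M$. Summing Horn's inequality along $n$ steps preserves $\preceq$, and $\preceq$ is stronger than $\preceq_w$, so the final conclusion $\preceq_w$ is fine — but I must be careful that the error term $\frac{C}{n}\unit$ is added in a way compatible with majorization (adding a multiple of $\unit$ to both sides of a $\preceq$ inequality preserves it, and $\xi \preceq \eta + c\unit$ implies $\xi \preceq_w \eta + c\unit$, which is what is claimed). The other delicate point is the interchange of limit/integral and the use of the MET to guarantee $\frac1n\int\vec{\sigma}^g(\rmd f^n_x)\,\rmd\mu(x) \to \vec{\lambda}(f)$ componentwise — but this is already carried out in the proof of Proposition \ref{prop_lmin_prop1} (with the uniform bounds \eqref{eq_alpha_bounds} supplying dominated convergence), so I would simply invoke it. The compactness of $M$ is essential throughout to get the constant $C$ independent of the point.
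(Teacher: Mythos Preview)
Your construction of $g^{\ep}$ as the barycenter of the pullbacks $g,\,f\ast g,\,\ldots,\,f^{n-1}\ast g$, and the use of the barycenter stability estimate \eqref{eq_vecd_ineq} to compare $g^{\ep}$ with $f\ast g^{\ep}$, are exactly the paper's approach. The gap is in how you pass from the resulting $\vec d$-inequality to the desired $\vec\sigma$-inequality.

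Having obtained $\vec{d}\bigl(g^{\ep}_x,(f\ast g^{\ep})_x\bigr) \preceq \tfrac{1}{n}\,\vec{d}\bigl(g_x,(f^n\ast g)_x\bigr)$, you then (a) bound the right-hand side crudely by $Cn$ via the vectorial triangle inequality, which gives only an $O(1)$ bound (your own arithmetic: $\tfrac{1}{n}\cdot Cn = C$), and (b) propose a separate ``telescoping'' argument with Horn's inequality to produce $\tfrac{1}{n}\vec{\sigma}^g(\rmd f^n_x)$ plus a correction $\tfrac{C}{n}\unit$. Part (b) does not work as stated: Horn's inequality reads $\vec{\sigma}(L_1\cdots L_n) \preceq \sum_i \vec{\sigma}(L_i)$, which is the wrong direction if you want to bound a single $\vec{\sigma}^{g^{\ep}}(\rmd f_x)$ from above by $\tfrac{1}{n}\vec{\sigma}^g(\rmd f^n_x)$. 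And the correction size $\tfrac{C}{n}$ is inconsistent with the $O(1)$ estimate you just derived in (a).

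What you are missing is that both sides of your $\vec d$-inequality are already, \emph{exactly}, twice the relevant singular-value vectors: a short computation with Lemma~\ref{lem_singvals} and Proposition~\ref{prop_vecd}(i)--(ii) (carried out in the paper) shows that
\[
  \vec{d}\bigl((F\ast g)_x,\,g_x\bigr) \;=\; 2\,\vec{\sigma}^{g}(\rmd F_x)
\]
for any $C^1$-diffeomorphism $F$ and any $g\in\MC^0$. Applying this identity with $F=f$, metric $g^{\ep}$ on the left and $F=f^n$, metric $g$ on the right (after reordering the arguments of $\vec d$, which is harmless since the involution of Proposition~\ref{prop_vecd}(iv) preserves $\preceq$) turns the barycenter inequality directly into
\[
  \vec{\sigma}^{g^{\ep}}(\rmd f_x) \;\preceq\; \tfrac{1}{n}\,\vec{\sigma}^{g}(\rmd f^n_x)
\]
with no correction term whatsoever. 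Integration over $\mu$ and the dominated-convergence step you already cite then finish the proof.
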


\begin{proof}
Using a fixed reference metric $g^0 \in \MC^0$, for each $N \in \N$ we define the new metric%
\begin{equation*}
  g^N := \bary(g^0,f^*g^0,f^{2*}g^0,\ldots,f^{(N-1)*}g^0)%
\end{equation*}
via the iterates of $f$ (in particular, $g^1 = g^0$). Then, using Lemma \ref{lem_pullback}(i) and (ii) together with the symmetry of the barycenter map, we find that%
\begin{align*}
  f^{(-1)*}g^N &= \bary(f^{(-1)*}g^0, g^0,\ldots,f^{(N-2)*}g^0) \\
	             &= \bary(g^0, f^*g^0, \ldots, f^{(N-2)*}g^0, f^{(-1)*}g^0).%
\end{align*}
We now compare the two metrics $g^N$ and $f^{(-1)*}g^N$ at the point $fx$ (for an arbitrary $x \in M$), using inequality \eqref{eq_vecd_ineq}:%
\begin{align*}
 & \vec{d}(g^N_{fx}, (f^{(-1)*}g^N)_{fx}) \\
 &= \vec{d}( \bary(\beta_{\phi}^{-1}(g^0_{fx}),\beta_{\phi}^{-1}([f^*g^0]_{fx}),\ldots,\beta_{\phi}^{-1}([f^{(N-1)*}g^0]_{fx})), \\
  &\qquad\qquad \bary(\beta_{\phi}^{-1}(g^0_{fx}),\beta_{\phi}^{-1}([f^*g^0]_{fx}),\ldots,\beta_{\phi}^{-1}([f^{(-1)*}g^0]_{fx}))) \\
 &\preceq \frac{1}{N}\vec{d}(\beta_{\phi}^{-1}([f^{(N-1)*}g^0]_{fx}), \beta_{\phi}^{-1}([f^{(-1)*}g^0]_{fx}) ) \\
 &= \frac{1}{N}\vec{d}(( f^{(N-1)*} g^0)_{fx},(f^{(-1)*}g^0)_{fx}) = \frac{1}{N}\vec{d}(g^0_{f^Nx},( f^{(-N)*}g^0)_{f^Nx}).%
\end{align*}
For the last identity, we used Lemma \ref{lem_pullback}(iv). Choose charts $(\phi,U)$ and $(\psi,V)$ around the points $f^Nx$ and $x$, respectively. Let $p(f^Nx)$ be the matrix representation of $g^0$ at $f^Nx$ and $p(x)$ the matrix representation of $g^0$ at $x$. Then, using Lemma \ref{lem_pullback}(iii), we find that%
\begin{align*}
  \vec{d}(g^0_{f^Nx},( f^{(-N)*} g^0)_{f^Nx}) = \vec{d}( p(f^Nx), \rmd (\psi \circ f^{-N} \circ \phi^{-1} )^{-1}_{\phi(f^Nx)} \ast p(x) ).%
\end{align*}
We observe that $\rmd(\psi \circ f^{-N} \circ \phi^{-1})_{\phi(f^Nx)}$ is the inverse of the local representation of $\rmd f^N$ at $x$, for which we write $A^N_x$. Hence, using Proposition \ref{prop_vecd}(ii), we obtain%
\begin{align*}
  \vec{d}(g^0_{f^Nx},(f^{(-N)*} g^0)_{f^Nx}) &= \vec{d}( p(f^Nx), A^N_x \ast p(x) ) \\
	                                           &= \vec{d}( I, [p(f^Nx)^{-\frac{1}{2}} A^N_x p(x)^{\frac{1}{2}}][p(f^Nx)^{-\frac{1}{2}} A^N_x p(x)^{\frac{1}{2}}]\trn ).%
\end{align*}
Writing $B^N_x := p(f^Nx)^{-\frac{1}{2}} A^N_x p(x)^{\frac{1}{2}}$, Proposition \ref{prop_vecd}(i) and Lemma \ref{lem_singvals} yield%
\begin{align*}
  \vec{d}(g^0_{f^Nx},(f^{(-N)*} g^0)_{f^Nx}) &= \vec{\sigma}(B^N_x(B^N_x)\trn) = 2\vec{\sigma}(B^N_x) \\
	&= 2\vec{\sigma}(p(f^Nx)^{-\frac{1}{2}}A^N_xp(x)^{\frac{1}{2}}) = 2\vec{\sigma}^{g^0}(\rmd f^N_x).%
\end{align*}
Using the same arguments for $\vec{d}(g^N_{fx}, (f^{(-1)*}g^N)_{fx})$, we have thus proven that%
\begin{equation}\label{eq_intermediate_statement}
  \vec{\sigma}^{g^N}(\rmd f_x) \preceq \frac{1}{N}\vec{\sigma}^{g^0}(\rmd f^N_x) \mbox{\quad for all\ } x \in M.%
\end{equation}
To prove the statement of the proposition, observe that%
\begin{equation*}
  \vec{\lambda}(f) = \int \lim_{n \rightarrow \infty}\frac{1}{n}\vec{\sigma}^{g^0}(\rmd f^n_x)\, \rmd\mu(x) = \lim_{n \rightarrow \infty}\int \frac{1}{n}\vec{\sigma}^{g^0}(\rmd f^n_x)\, \rmd\mu(x),%
\end{equation*}
where we use again the dominated convergence theorem. Hence, for the given $\ep > 0$, we can find $N$ such that%
\begin{equation*}
  \int \vec{\sigma}^{g^N}(\rmd f_x)\, \rmd\mu(x) \stackrel{\eqref{eq_intermediate_statement}}{\preceq} \int \frac{1}{N}\vec{\sigma}^{g^0}(\rmd f^N_x)\, \rmd\mu(x) \preceq_w \vec{\lambda}(f) + \ep\unit,%
\end{equation*}
which completes the proof.
\end{proof}

\begin{remark}
A careful inspection of the proof shows that the assumption of invertibility of $f$ is actually not needed. It suffices to assume that the derivative $\rmd f_x$ is an invertible linear operator for each $x \in M$. This is because, even though we are pulling back the metric $g^N$ by $f^{-1}$, we are only evaluating the resulting metric at $fx$, and it holds that%
\begin{equation*}
  (f^{(-1)*} g)_{fx} = g_x(\rmd f^{-1}_{fx}v,\rmd f^{-1}_{fx}w) = g_x((\rmd f_x)^{-1}v,(\rmd f_x)^{-1}w).%
\end{equation*}
Also in \cite{Boc}, only the invertibility of the values of the linear cocycle is required. 
\end{remark}

Proposition \ref{prop_lmin_prop1} and \ref{prop_lmin_prop2} together show that the Lyapunov vector $\vec{\lambda}(f)$ can be regarded as a \emph{strong infimum}\footnote{We use the term \emph{strong infimum} in contrast to a \emph{weak ($=$ Pareto) infimum}.} of the vector-valued function%
\begin{equation*}
  \vec{\SC}_{f,\mu}:g \mapsto \int\vec{\sigma}^g(\rmd f_x)\, \rmd\mu(x), \quad \vec{\SC}_{f,\mu}:\MC^0 \rightarrow \fa^+.%
\end{equation*}
The infimum needs to be understood with respect to the order $\preceq$ that was defined on the convex cone $\fa^+$. Of course, the space $\MC^0$ is huge, so it would be convenient if we could reduce this minimization problem to a smaller subspace. A first observation is that the multiplication with a positive scalar function does not change the value of $\vec{\SC}_{f,\mu}$:%
\begin{equation*}
  \vec{\SC}_{f,\mu}(\gamma \cdot g) = \vec{\SC}_{f,\mu}(g) \mbox{\quad for all\ } \gamma \in C^0(M,\R_{>0}),\ g \in \MC^0.%
\end{equation*}
Indeed, Lemma \ref{lem_singvals} implies%
\begin{equation*}
  \log\alpha_i^{\gamma \cdot g}(\rmd f_x) = \frac{1}{2}(\log \gamma(fx) - \log \gamma(x)) + \log \alpha_i^g(\rmd f_x),%
\end{equation*}
and the integral over $\log\gamma(fx) - \log\gamma(x)$ vanishes because $\mu$ is $f$-invariant. It follows that we can restrict the search for an optimal metric to the space of metrics inducing the same (arbitrary) volume form on $M$. To see this, let a volume form $\omega_0$ be fixed and pick an arbitrary $g \in \MC^0$. The metric $g$ induces a volume form $\omega_g = \sqrt{\det g}\,\rmd x^1 \wedge \ldots \wedge \rmd x^d$. Then, there exists a positive function $\gamma$ such that $\omega_g = \gamma \cdot \omega_0$, and the metric $\tilde{g} := \gamma^{-2/d} \cdot g$ induces the volume form $\gamma^{-1} \cdot \omega_g = \omega_0$. In particular, in dimension $d = 1$ there is no need to optimize, because here any metric is just a positive scalar function.%

Furthermore, we can restrict $\vec{\SC}_{f,\mu}$ to the space of smooth metrics on $M$, since every continuous metric can be approximated uniformly (by compactness of $M$) by smooth metrics and the singular values depend continuously on the metric in the uniform topology.%

Recall that $\MC$ denotes the space of all smooth Riemannian metrics on $M$. Given a smooth volume form $\omega$, we let $\MC_{\omega} \subset \MC$ denote the subspace of metrics inducing $\omega$. We then have the following theorem which will be the basis of our further investigations.%

\begin{theorem}\label{thm_bochi}
Let $f:M \hookleftarrow$ be a $C^1$-diffeomorphism on a compact Riemannian manifold, preserving the probability measure $\mu$. Then, for every smooth volume form $\omega$ on $M$, the following statements hold:%
\begin{enumerate}
\item[(i)] For every $g \in \MC_{\omega}$, it holds that%
\begin{equation*}
  \vec{\lambda}(f) \preceq \vec{\SC}_{f,\mu}(g).%
\end{equation*}
\item[(ii)] For every $\ep > 0$, there exists $g^{\ep} \in \MC_{\omega}$ such that%
\begin{equation*}
  \vec{\SC}_{f,\mu}(g^{\ep}) \preceq_w \vec{\lambda}(f) + \ep\unit.%
\end{equation*}
\item[(iii)] For every $\ep > 0$, there exists $g^{\ep} \in \MC_{\omega}$ such that\footnote{Statement (iii) and its proof have been suggested by one of the reviewers.}%
\begin{equation*}
  \|\vec{\sigma}^{g^{\ep}}(\rmd f_{\cdot}) - \vec{\lambda}(\cdot)\|_1 = \int \|\vec{\sigma}^{g^{\ep}}(\rmd f_x) - \vec{\lambda}(x)\|\, \rmd\mu(x) \leq \ep.%
\end{equation*}
\end{enumerate}
\end{theorem}

\begin{proof}
It only remains to prove (iii). To this end, let us first recap the following simple fact from measure theory:%

\emph{Let $(\varphi_n)$ and $(\psi_n)$ be two sequences of real-valued functions in $L^1(\mu)$ satisfying $\varphi_n \leq \psi_n$ for all $n$. Further suppose that $\psi_n$ converges in $L^1(\mu)$ to some function $\psi$ and $\int \psi\, \rmd\mu \leq \int \varphi_n\, \rmd\mu$ for all $n$. Then $\varphi_n$ also converges to $\psi$ in $L^1(\mu)$.}%

This follows from the estimate%
\begin{align*}
  |\varphi_n(x) - \psi(x)| + (\varphi_n(x) - \psi(x)) &= 2(\varphi_n(x) - \psi(x))^+ \\
	&\leq 2(\psi_n(x) - \psi(x))^+ \leq 2|\psi_n(x) - \psi(x)|,%
\end{align*}
which implies $\|\varphi_n - \psi\|_1 \leq 2\|\psi_n - \psi\|_1$.%

Now, consider the pointwise inequality%
\begin{equation*}
  \vec{\sigma}^{g^N}(\rmd f_x) \preceq \frac{1}{N}\vec{\sigma}^{g^0}(\rmd f^N_x)%
\end{equation*}
taken from \eqref{eq_intermediate_statement}. According to the MET, the right-hand side converges in $L^1(\mu)$ to $\vec{\lambda}(x)$ as $N \rightarrow \infty$. On the other hand, Proposition \ref{prop_lmin_prop1} gives%
\begin{equation*}
  \int \vec{\lambda}(x) \rmd\mu(x) \preceq \int \vec{\sigma}^{g^N}(\rmd f_x)\, \rmd\mu(x).%
\end{equation*}
We can now apply the fact proven above componentwise to obtain statement (iii).
\end{proof}

\begin{remark}
It is a simple exercise to show that the inequality%
\begin{equation}\label{eq_sandwich}
  a \preceq_w b \preceq_w a + \ep\unit%
\end{equation}
for vectors $a,b \in \fa^+$ implies $\|a - b\|_{\infty} \leq \ep$. Hence, Theorem \ref{thm_bochi} (i) and (ii) together imply that $\vec{\lambda}(f)$ can be approximated by vectors of the form $\vec{\SC}_{f,\mu}(g)$ with $g \in \MC_{\omega}$, i.e.%
\begin{equation*}
  \left\| \int (\vec{\lambda}(x) - \vec{\sigma}^g(\rmd f_x))\, \rmd\mu(x) \right\|%
\end{equation*}
can be made arbitrarily small by an appropriate choice of $g$. Statement (iii) of the theorem provides the stronger statement that%
\begin{equation*}
  \int \|\vec{\lambda}(x) - \vec{\sigma}^g(\rmd f_x)\|\, \rmd\mu(x)%
\end{equation*}
can be made arbitrarily small.
\end{remark}

\begin{remark}
It should also be noted that the sequence of metrics employed in the proof of Proposition \ref{prop_lmin_prop2} is not very helpful for concrete computations, because it involves the derivatives of the iterates of $f$ just like the definition of the Lyapunov exponents.
\end{remark}

\section{The space of Riemannian metrics}\label{sec_riem_metrics}

To explore deeper properties of the optimization problem for $g \mapsto \vec{\SC}_{f,\mu}(g)$, we need to gain a better understanding of the spaces $\MC$ and $\MC_{\omega}$ and their natural geometric structures. In this section, we gather some facts from the literature about these spaces, mainly taken from \cite{Ebi,FGr,Cla}. We also refer to \cite{Sch} for a comprehensive general treatment of manifolds of smooth mappings.%

A Riemannian metric $g$ assigns to each $x \in M$ an inner product on the tangent space $T_xM$, i.e.~a positive-definite symmetric $(0,2)$-tensor $g_x:T_xM \tm T_xM \rightarrow \R$. We write $T^0_2M$ for the tensor bundle of $(0,2)$-tensors on $M$ and $S^0_2M \subset T^0_2M$ for the subbundle of symmetric tensors. This subbundle attaches to each $x \in M$ the vector space of all symmetric $(0,2)$-tensors on $T_xM$. In this formal setup, a Riemannian metric is a special section of the bundle $S^0_2M$, namely one that is positive-definite. We write $\SC^0_2M$ for the space of smooth sections of $S^0_2M$, i.e., of symmetric $(0,2)$-tensor fields of class $C^{\infty}$. Obviously, $\SC^0_2M$ is an infinite-dimensional vector space over the reals with the pointwise addition and scalar multiplication. Indeed, $\SC^0_2M$ is a Fr\'echet space with the $C^{\infty}$-topology, of which $\MC$ is an open subset. The fact that $\MC$ is open in the Fr\'echet space $\SC^0_2M$ implies that $\MC$ is a trivial Fr\'echet manifold whose tangent space at any point can be identified canonically with $\SC^2_0M$.%

We define an inner product on each tangent space of $\MC$ as%
\begin{equation*}
  \langle h,k \rangle_g := \int \tr(g^{-1}_xh_xg^{-1}_xk_x)\, \rmd\omega_g(x) \mbox{\quad for all\ } h,k \in T_g\MC = \SC^2_0M,%
\end{equation*}
where $\omega_g$ is the volume form induced by $g$, and $g_x$, $h_x$, $k_x$ have to be understood as local representations. With similar arguments as used in the previous section, one shows that the trace is independent of the charts used to obtain these representations. In terms of the trace metric $\langle \cdot,\cdot \rangle_{g_x}$, we can also write%
\begin{equation*}
  \langle h,k \rangle_g = \int \langle h_x,k_x \rangle_{g_x}\, \rmd\omega_g(x).%
\end{equation*}
For obvious reasons, this Riemannian metric is called the \emph{$L^2$-metric} on $\MC$. It is only a \emph{weak} metric, meaning that the induced topology on the tangent space does not coincide with the inherited topology of the manifold. Nevertheless, as shown in \cite{Cla}, the geodesic distance induced by the $L^2$-metric turns $\MC$ into a (noncomplete) metric space.\footnote{As on a finite-dimensional manifold, the geodesic distance of two points $g^1,g^2$ is defined as the infimum over the lengths of all piecewise differentiable curves connecting $g^1$ and $g^2$. On general weak Riemannian manifolds, this is only a pseudo-metric.} It can also be seen easily (using Lemma \ref{lem_pullback}(iii)) that the $L^2$-metric is invariant under the diffeomorphism group of $M$ acting by pullback. An explicit expression for the distance function on $\MC$, induced by the $L^2$-metric, was given in \cite{Cl2} (see also \cite[App.~B]{CSu} for a more elegant proof):%
\begin{equation}\label{eq_l2_distance}
  d_{L^2}(g^1,g^2) = \Bigl(\int_M d_x(g^1_x,g^2_x)^2\, \rmd\omega_{g^0}(x)\Bigr)^{\frac{1}{2}},%
\end{equation}
where $g^0$ is an arbitrary reference metric with $\vol(M,g^0) = 1$, and $d_x(\cdot,\cdot)$ is the fiber metric on the space of inner products on $T_xM$, induced by the Riemannian metric $\langle a,b \rangle_p = \tr(p^{-1}ap^{-1}b)\sqrt{\det(g^0(x)^{-1}p)}$.%

The space $\MC$ admits the global product structure $\MC \cong \VC \tm \MC_{\omega}$, where $\VC$ denotes the space of smooth volume forms on $M$. The space $\VC$ is a Fr\'echet manifold, in fact an open subset of $\Omega^d(M)$, the Fr\'echet space of order-$d$ differential forms on $M$. Given $\omega \in \VC$ and $\nu \in \Omega^d(M)$, there exists a unique smooth function, denoted by $\frac{\nu}{\omega}$, such that%
\begin{equation*}
  \nu = \left(\frac{\nu}{\omega}\right)\omega.%
\end{equation*}
In case that $\nu$ is also a volume form, the function $\frac{\nu}{\omega}$ is strictly positive. The space $\MC_{\omega}$ is a smooth submanifold of $\MC$ with tangent space%
\begin{equation*}
  T_g\MC_{\omega} = \{ h \in \SC^0_2M : \tr(g_x^{-1}h_x) = 0 \mbox{\ for all\ } x \in M \}.%
\end{equation*}

The following result is proven in \cite[Prop.~1.13 and Prop.~2.2]{FGr}.\footnote{Actually, the result says much more, namely that $\MC_{\omega}$ is a symmetric space.}%

\begin{proposition}\label{prop_mcnu}
For each $\omega \in \VC$, the submanifold $\MC_{\omega}$ is geodesically complete and the geodesic starting at $g^0 \in \MC_{\omega}$ with initial tangent $h$ is given by%
\begin{equation}\label{eq_geodesic_formula}
  g^t = g^0 \exp(t (g^0)^{-1}h).%
\end{equation}
\end{proposition}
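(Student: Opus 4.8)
The plan is to exploit the fact that, after restriction to $\MC_{\omega}$, the $L^2$-metric decouples over the base manifold $M$ into the fibrewise trace metrics on $\SC^+_d$, so that the geodesic equation becomes a pointwise condition on $\SC^+_d$ which is solved by the stated formula.

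Since every $g \in \MC_{\omega}$ induces the fixed volume form $\omega$, the $L^2$-metric restricted to $\MC_{\omega}$ is
\[
  \langle h,k \rangle_g = \int_M \tr(g_x^{-1}h_xg_x^{-1}k_x)\, \rmd\omega(x), \qquad h,k \in T_g\MC_{\omega}
\]
(local representations), i.e.~the $L^2$-integral over $M$ of the fibrewise trace metrics. I would therefore regard $\MC_{\omega}$ as a space of smooth sections of the bundle over $M$ whose fibre over $x$ is $\mathcal{P}_x := \{ p \in \SC^+_d : \mydet p = \mydet g^0_x \}$ (relative to a local chart), each fibre carrying the trace metric; by Fubini the energy of a path $g(\cdot)$ in $\MC_{\omega}$ then factorizes as $E(g) = \frac12\int_0^1 \langle \dot g_t,\dot g_t\rangle_{g_t}\,\rmd t = \int_M E_x(g_{\bullet}(x))\,\rmd\omega(x)$, where $E_x$ is the trace-metric energy on $\SC^+_d$ of the curve $t \mapsto g_t(x)$.

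Next I would check the candidate $g^t := g^0\exp(t(g^0)^{-1}h)$. Applying $X\exp(Y)X^{-1} = \exp(XYX^{-1})$ with $X = (g^0_x)^{1/2}$ shows that pointwise $g^t_x = (g^0_x)^{1/2}\exp\!\bigl(t(g^0_x)^{-1/2}h_x(g^0_x)^{-1/2}\bigr)(g^0_x)^{1/2} = \gamma_{g^0_x,h_x}(t)$, the trace-metric geodesic of $\SC^+_d$ recalled in Section \ref{sec_bochi}; in particular $g^t_x$ depends smoothly on $x$, is symmetric positive-definite, and $\dot g^0 = h$. Moreover $\mydet g^t_x = \mydet g^0_x\cdot\exp\!\bigl(t\,\tr((g^0_x)^{-1}h_x)\bigr) = \mydet g^0_x$, since $h \in T_{g^0}\MC_{\omega}$ forces $\tr((g^0_x)^{-1}h_x) = 0$ for all $x$; hence $g^t$ induces $\omega$, i.e.~$g^t \in \MC_{\omega}$, for every $t \in \R$. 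Now let $g^s(\cdot)$ be any variation of $g^{\bullet}$ inside $\MC_{\omega}$ with fixed endpoints: at each $x$ it restricts to a fixed-endpoint variation of the $\SC^+_d$-geodesic $g^{\bullet}_x$, so the fibrewise first variation $\frac{\rmd}{\rmd s}\big|_{0}E_x(g^s_{\bullet}(x))$ vanishes for every $x$, and integrating over $M$ gives $\frac{\rmd}{\rmd s}\big|_{0}E(g^s) = 0$. Thus $g^{\bullet}$ is a critical point of the energy on $\MC_{\omega}$, hence an $L^2$-geodesic (the Levi-Civita connection of the $L^2$-metric on $\MC_{\omega}$ exists, so criticality of the energy is equivalent to $\nabla_t\dot g = 0$), and by the uniqueness of geodesics with prescribed initial data it is \emph{the} geodesic with initial tangent $h$. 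Since the formula is defined and remains in $\MC_{\omega}$ for all $t \in \R$, every geodesic extends to all of $\R$, so $\MC_{\omega}$ is geodesically complete.

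The delicate points are of weak-Riemannian nature — that a critical point of the energy is a geodesic, and that geodesics with given initial data are unique, for the (weak) $L^2$-metric, both part of the structure theory developed in \cite{FGr} — together with the routine justification of differentiating under $\int_M$ and of producing enough fixed-endpoint variations in $\MC_{\omega}$ (e.g.~$g^s_t := g_t\exp(s\,g_t^{-1}V(t))$ with $\tr(g_t^{-1}V(t)) \equiv 0$ stays in $\MC_{\omega}$). The only genuine computation is the decoupling of the trace metric in the determinant direction — equivalently, that $\mathcal{P}_x$ is totally geodesic in $(\SC^+_d,\text{trace metric})$ — which follows by splitting a tangent vector $v$ at $p$ as $v = \frac1d\tr(p^{-1}v)\,p + v_0$ with $\tr(p^{-1}v_0) = 0$ and computing $\tr(p^{-1}vp^{-1}v) = \frac1d\tr(p^{-1}v)^2 + \tr(p^{-1}v_0p^{-1}v_0)$, exhibiting $\SC^+_d$ isometrically as a Riemannian product of a determinant line and the determinant-one submanifold.
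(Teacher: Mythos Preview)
The paper does not actually prove this proposition: it simply cites \cite[Prop.~1.13 and Prop.~2.2]{FGr} and moves on. So there is no argument in the paper to compare yours against; you have supplied what the paper outsources.

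Your sketch is sound. The key observations --- that on $\MC_{\omega}$ the volume factor in the $L^2$-metric freezes to the fixed $\omega$, that the energy therefore decouples fibrewise into trace-metric energies on the constant-determinant slices $\mathcal{P}_x \subset \SC^+_d$, that these slices are totally geodesic in $\SC^+_d$, and that the candidate curve is pointwise the trace-metric geodesic and stays in $\mathcal{P}_x$ because $\tr((g^0_x)^{-1}h_x)=0$ --- are exactly right and together force the first variation of the energy to vanish. The only places where real work is still hidden are the ones you flag yourself: in a weak Riemannian setting one cannot take for granted that the Levi--Civita connection exists, that energy-critical curves are geodesics, or that geodesics are unique for given initial data. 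These are precisely the structural facts established in \cite{FGr}, so your deferral is honest rather than a gap. A slightly more self-contained route (and closer to what \cite{FGr} actually do) is to compute the Euler--Lagrange equation of the energy directly --- on $\MC_{\omega}$ with fixed $\omega$ it reduces to the pointwise equation $\ddot g_t - \dot g_t g_t^{-1}\dot g_t = 0$, whose solution is visibly $g^0\exp(t(g^0)^{-1}h)$ --- thereby bypassing the abstract ``critical point $\Rightarrow$ geodesic'' step.
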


\begin{remark}
The formula \eqref{eq_geodesic_formula} needs to be understood pointwise. That is,%
\begin{equation*}
  g^t_x = g^0_x \exp(t (g^0_x)^{-1} h_x) \mbox{\quad for all\ } x \in M,%
\end{equation*}
where we can use local representations of the involved objects $g^0$ and $h$ with respect to a fixed chart, and $\exp(\cdot)$ is the usual matrix exponential.
\end{remark}

\begin{remark}
The statement that $\MC_{\omega}$ is geodesically complete does not imply that $\MC_{\omega}$ is complete as a metric space, when equipped with the geodesic distance. In finite dimensions, this implication holds by the Hopf-Rinow theorem, but in infinite dimensions there are counter-examples. To the best of my knowledge, it is not known if $\MC_{\omega}$ is also complete as a metric space (but most probably it is not, because $\MC$ is not complete).%
\end{remark}

\begin{remark}
The submanifold $\MC_{\omega}$ is not totally geodesic in $\MC$. Indeed, the formula for geodesics in $\MC$ is different, and will not be used in this paper.
\end{remark}

\begin{corollary}\label{cor_geodesics}
For each pair of metrics $g^a,g^b \in \MC_{\omega}$, there exists a unique geodesic segment on $[0,1]$, connecting $g^a$ and $g^b$. It is given by%
\begin{equation}\label{eq_geodesic_endpoints_formula}
  g^t_x := (g^a_x \#_t\, g^b_x) \mbox{\quad for all\ } x \in M,\ t \in [0,1],%
\end{equation}
where the operation $\#_t$ carries over from $\SC^+_d$ to the space of symmetric positive-definite $(0,2)$-tensors on $T_xM$ via local representations.
\end{corollary}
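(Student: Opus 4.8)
The plan is to deduce Corollary~\ref{cor_geodesics} from Proposition~\ref{prop_mcnu} by the standard argument that turns an initial-value description of geodesics into a boundary-value one. First I would observe that \eqref{eq_geodesic_endpoints_formula} does define a curve in $\MC_\omega$: pointwise, $g^a_x \#_t\, g^b_x$ is a symmetric positive-definite $(0,2)$-tensor on $T_xM$ (using the local-representation transfer of $\#_t$ from $\SC^+_d$, whose chart-independence follows exactly as for the barycenter, via \eqref{eq_localrep_rel} and Proposition~\ref{prop_vecd}(ii)); smoothness in $x$ and $t$ is inherited from the explicit formula $g^a_x{}^{1/2}(g^a_x{}^{-1/2}g^b_x g^a_x{}^{-1/2})^t g^a_x{}^{1/2}$; and at each $x$ both endpoints induce the same volume density, so $\det(g^a_x{}^{-1}g^b_x)=1$, whence $\det(g^a_x{}^{-1/2}g^b_x g^a_x{}^{-1/2})=1$ and therefore $\det g^t_x = \det g^a_x$ for all $t$, i.e.\ $g^t \in \MC_\omega$.

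Next I would identify this curve with one of the geodesics from Proposition~\ref{prop_mcnu}. Set $h := \tfrac{\rmd}{\rmd t}\big|_{t=0} g^t$, which pointwise is $g^a_x \log(g^a_x{}^{-1}g^b_x)$; since $\tr\big(g^a_x{}^{-1}h_x\big) = \tr\log(g^a_x{}^{-1}g^b_x) = \log\det(g^a_x{}^{-1}g^b_x) = 0$, we have $h \in T_{g^a}\MC_\omega$. Plugging this $h$ into the geodesic formula \eqref{eq_geodesic_formula} with $g^0 = g^a$ gives, pointwise, $g^a_x \exp\!\big(t\log(g^a_x{}^{-1}g^b_x)\big) = g^a_x (g^a_x{}^{-1}g^b_x)^t$, which is precisely the alternative expression for $g^a_x \#_t\, g^b_x$. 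So \eqref{eq_geodesic_endpoints_formula} \emph{is} the geodesic issuing from $g^a$ with initial tangent $h$; in particular it is a geodesic, it lies in $\MC_\omega$, and at $t=1$ it equals $g^a_x(g^a_x{}^{-1}g^b_x) = g^b_x$, so it connects $g^a$ to $g^b$ on $[0,1]$.

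It then remains to argue uniqueness. Here I would invoke geodesic completeness from Proposition~\ref{prop_mcnu} together with the fact, recorded in \cite{FGr}, that $\MC_\omega$ is a symmetric space: on a (geodesically complete) symmetric space the exponential map at a point is not merely defined on all of the tangent space but is a diffeomorphism onto $\MC_\omega$ — equivalently, $\MC_\omega$ has nonpositive curvature and is simply connected, being a Cartan--Hadamard-type space — so any two points are joined by a \emph{unique} geodesic. Pointwise this reflects the uniqueness of geodesics in $\SC^+_d$, already used to define $\#_t$. Hence the curve \eqref{eq_geodesic_endpoints_formula} is the only geodesic on $[0,1]$ from $g^a$ to $g^b$.

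The main obstacle is the uniqueness claim: in infinite dimensions ``geodesically complete'' alone is genuinely weaker than ``any two points joined by a unique geodesic'' (as the remark following Proposition~\ref{prop_mcnu} already flags, Hopf--Rinow can fail). So I must lean on the stronger symmetric-space structure of $\MC_\omega$ from \cite{FGr} rather than on completeness alone, or alternatively argue uniqueness fiberwise from the uniqueness of $\#_t$ in $\SC^+_d$ — the latter being cleaner, since any geodesic in $\MC_\omega$ restricts, by the pointwise form of \eqref{eq_geodesic_formula}, to a geodesic in each fiber $\SC^+_d$, and these fiberwise geodesics are unique with prescribed endpoints.
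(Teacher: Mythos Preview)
Your proposal is correct and, once you settle on the fiberwise alternative for uniqueness, essentially coincides with the paper's proof. Part (1) is identical in substance: the paper also writes down the initial tangent $h$ (in the equivalent form $h_x = (g^a_x)^{1/2}\log\bigl((g^a_x)^{-1/2}g^b_x(g^a_x)^{-1/2}\bigr)(g^a_x)^{1/2}$), checks $h \in T_{g^a}\MC_\omega$ via $\tr((g^a_x)^{-1}h_x) = \log\det((g^a_x)^{-1}g^b_x) = 0$, and verifies that the resulting geodesic from Proposition~\ref{prop_mcnu} is the $\#_t$-curve.

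For uniqueness, the paper does exactly your fiberwise argument, phrased a bit more directly: any geodesic in $\MC_\omega$ must be of the form $g^a\exp(t(g^a)^{-1}h)$ by Proposition~\ref{prop_mcnu}, and the endpoint condition $g^a_x\exp((g^a_x)^{-1}h_x) = g^b_x$ determines $h_x$ uniquely as $g^a_x\log((g^a_x)^{-1}g^b_x)$ via the matrix logarithm. Your first route through the global symmetric-space/Cartan--Hadamard structure is unnecessary (and, as you yourself note, delicate in infinite dimensions); the paper avoids it entirely, and you should too---the pointwise argument is both cleaner and self-contained.
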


\begin{proof}
The proof consists of two parts. First, we show that the curve given by \eqref{eq_geodesic_endpoints_formula} is a geodesic segment in $\MC_{\omega}$ which connects $g^a$ and $g^b$. Second, we show that each such geodesic segment is of the given form.%

(1) The curve $t \mapsto g^t$ is well-defined as a curve in $\MC$ because of \eqref{eq_localrep_rel} and \eqref{eq_splus_isometries_geodesics}. Obviously, $g^0 = g^a$ and $g^1 = g^b$. To show that $t \mapsto g^t$ is a geodesic segment in $\MC_{\omega}$, we define (in terms of local representations)%
\begin{equation*}
  h_x := (g^a_x)^{\frac{1}{2}}\log((g^a_x)^{-\frac{1}{2}} g^b_x (g^a_x)^{-\frac{1}{2}})(g^a_x)^{\frac{1}{2}} \mbox{\quad for all\ } x \in M.%
\end{equation*}
Here, $\log(\cdot)$ denotes the matrix logarithm, which is well-defined and unique as an operator on $\SC^+_d$. It is clear that $h_x$ is symmetric for each $x$ and easy to show that $h_x$ is well-defined, i.e.~independent of the representation. Hence, $h$ is a section of $S^0_2M$. It is a smooth section, because it is constructed from smooth sections via smooth operations. Hence, $h \in \SC^0_2M$. Finally, we have%
\begin{align*}
  \tr((g^1_x)^{-1}h_x) &= \tr(\log((g^a_x)^{-\frac{1}{2}} g^b_x (g^a_x)^{-\frac{1}{2}})) = \log \det ((g^a_x)^{-\frac{1}{2}} g^b_x (g^a_x)^{-\frac{1}{2}})) \\
	&= \log[\det(g^a_x)^{-1}\det(g^b_x)] = \log(1) = 0,%
\end{align*}
because the fact that $g^a$ and $g^b$ induce the same volume form implies $\det(g^a_x) = \det(g^b_x)$ for all $x \in M$. This shows that $h \in T_{g^a}\MC_{\omega}$. To see that $t \mapsto g^t$ is a geodesic segment, it remains to show that $g^t = g^a \exp(t (g^a)^{-1}h)$ and invoke Proposition \ref{prop_mcnu}. This, however, is a simple computation, and we leave it to the reader.%

(2) To show the uniqueness, let $t \mapsto \tilde{g}^t$ be any geodesic segment in $\MC_{\omega}$ with $\tilde{g}^0 = g^a$ and $\tilde{g}^1 = g^b$. By Proposition \ref{prop_mcnu}, it must be of the form $\tilde{g}^t = g^a \exp(t (g^a)^{-1}h)$ for some $h \in T_{g^a}\MC_{\omega}$, implying $g^a_x\exp( (g^a_x)^{-1}h_x) = g^b_x$ for all $x \in M$. Thus,%
\begin{align*}
  h_x &= (g^a_x)\log((g^a_x)^{-1}g^b_x) = (g^a_x)\log((g^a_x)^{-\frac{1}{2}}(g^a_x)^{-\frac{1}{2}}g^b_x (g^a_x)^{-\frac{1}{2}}(g^a_x)^{\frac{1}{2}}) \\
			&= (g^a_x)^{\frac{1}{2}}\log((g^a_x)^{-\frac{1}{2}} g^b_x (g^a_x)^{-\frac{1}{2}})(g^a_x)^{\frac{1}{2}}.%
\end{align*}
Hence, by the first part of the proof, it follows that $\tilde{g}^t \equiv g^t$.
\end{proof}

\section{Convexity}\label{sec_convexity}

In this section, we prove that the function $\vec{\SC}_{f,\mu}:\MC_{\omega} \rightarrow \fa^+$ is convex with respect to the structures introduced on its domain and co-domain. First, we need to prove continuity, which is implied by the following lemma.%

\begin{lemma}\label{lem_continuity}
The function $(g,x) \mapsto \vec{\sigma}^g(\rmd f_x)$ is continuous on $\MC \tm M$.
\end{lemma}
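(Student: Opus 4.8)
The plan is to localize and reduce everything to the explicit chart formula of Lemma \ref{lem_singvals}, after which the statement becomes a matter of stringing together continuity of a few elementary matrix operations. Since continuity is a local property, it suffices to fix $(g^0,x^0) \in \MC \tm M$ and prove continuity near this point. Using that $f$ is continuous, I would choose a chart $(\phi,U)$ around $x^0$ and a chart $(\psi,V)$ around $fx^0$ with $f(U) \subseteq V$. Then, for $(g,x) \in \MC \tm U$, Lemma \ref{lem_singvals} gives
\[
  \vec{\sigma}^g(\rmd f_x) = \vec{\sigma}\bigl(B(g,x)\bigr), \qquad B(g,x) := p_2(g,x)^{-\frac{1}{2}} A_x\, p_1(g,x)^{\frac{1}{2}},
\]
where $p_1(g,x) = \beta_{\phi}^{-1}(g_x)$, $p_2(g,x) = \beta_{\psi}^{-1}(g_{fx})$ and $A_x = \rmd(\psi \circ f \circ \phi^{-1})_{\phi(x)} \in \GL(d,\R)$. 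So it is enough to show that $(g,x) \mapsto B(g,x)$ is a continuous $\GL(d,\R)$-valued map on $\MC \tm U$, and then that the vector of (log-)singular values depends continuously on an invertible matrix.

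For the first part I would argue that $x \mapsto A_x$ is continuous on $U$ because $f$ is $C^1$ and $\phi,\psi$ are smooth, and that the evaluation maps $(g,x) \mapsto g_x$ (read off in $\phi$) and $(g,x) \mapsto g_{fx}$ (read off in $\psi$) are jointly continuous into $\SC_d$: convergence in the $C^{\infty}$-topology on $\SC^0_2M$ entails, in particular, uniform convergence of the $0$-jets of the local representations, which yields joint continuity. By the explicit formula \eqref{eq_metric_matrix_rep}, $\beta_{\phi}^{-1}$ and $\beta_{\psi}^{-1}$ are obtained from these representations by a fixed matrix inversion, so $(g,x) \mapsto p_1(g,x)$ and $(g,x) \mapsto p_2(g,x)$ are continuous with values in $\SC_d^+$. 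Since inversion is continuous on $\GL(d,\R)$ and $p \mapsto p^{\frac{1}{2}}$ is continuous (indeed smooth) on $\SC_d^+$, composing shows $(g,x) \mapsto p_1^{\frac{1}{2}}$ and $(g,x) \mapsto p_2^{-\frac{1}{2}}$ are continuous, hence so is their product with $A_x$, namely $B(g,x)$, which lies in $\GL(d,\R)$ because $f$ is a diffeomorphism.

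For the second part, the singular values of $B$ are the positive square roots of the eigenvalues of the symmetric positive-definite matrix $BB\trn$, and the eigenvalues of a symmetric matrix depend continuously on its entries; since $B$ is invertible they are all strictly positive, so $B \mapsto \vec{\sigma}(B)$ is continuous on $\GL(d,\R)$. Composing with $(g,x) \mapsto B(g,x)$ gives continuity of $(g,x) \mapsto \vec{\sigma}^g(\rmd f_x)$ on $\MC \tm U$, and since $(g^0,x^0)$ was arbitrary, on all of $\MC \tm M$. The only point that requires a moment's care — and which I regard as the main (albeit modest) obstacle — is the joint continuity of the evaluation map $(g,x)\mapsto g_x$, together with checking that the local ingredients $A_x$, $\beta_{\phi}$, $\beta_{\psi}$ in Lemma \ref{lem_singvals} vary continuously with the base point $x$, so that the chart formula is genuinely continuous in both arguments simultaneously; everything else is routine bookkeeping with continuity of matrix inversion, square root and eigenvalues.
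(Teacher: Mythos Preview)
Your proposal is correct and follows essentially the same approach as the paper: localize via Lemma \ref{lem_singvals} to a matrix expression, invoke continuity of the evaluation map $(g,x)\mapsto g_x$, and then use continuity of matrix square roots, inversion and singular values. The paper's proof is more terse and cites \cite[Rem.~2.1.2]{Sch} for the joint continuity of the evaluation map, whereas you sketch this directly from the $C^{\infty}$-topology; otherwise the arguments coincide.
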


\begin{proof}
We denote the function under consideration by $\beta$. By Lemma \ref{lem_singvals}, we can write $\beta$ locally as%
\begin{equation*}
  \beta(g,x) = \vec{\sigma}( \mathrm{ev}(g,fx)^{-\frac{1}{2}} \cdot \rmd f_x \cdot \mathrm{ev}(g,x)^{\frac{1}{2}} ),%
\end{equation*}
where $\mathrm{ev}(g,x) := g_x$ is the evaluation map. By \cite[Rem.~2.2]{Sch}, the evaluation map is continuous, implying that $(g,x) \mapsto \mathrm{ev}(g,fx)^{-\frac{1}{2}} \cdot \rmd f_x \cdot \mathrm{ev}(g,x)^{\frac{1}{2}}$ is continuous. Since the singular values depend continuously on the matrix, then also $\beta$ is continuous.
\end{proof}

The notion of convexity to be used for $\vec{\SC}_{f,\mu}$ is a natural combination of the two well-studied notions of \emph{geodesic convexity} (see, e.g., \cite{Bou,Udr}) and \emph{cone-convexity} (see, e.g., \cite{Luc}). Recall that a function $\varphi:M \rightarrow \R$, defined on a finite-dimensional Riemannian manifold $(M,g)$, is called \emph{geodesically convex} if the composition $\varphi \circ \gamma$ with an arbitrary geodesic $\gamma$ in $M$ is convex in the usual sense. This can be reformulated as%
\begin{equation*}
  \varphi(\gamma(t)) \leq (1 - t)\varphi(\gamma(0)) + t\varphi(\gamma(1)) \mbox{\quad for all\ } t \in [0,1].%
\end{equation*}
If $\varphi$ instead takes values in a vector space equipped with a partial order $\preceq$ induced by a convex cone, and the inequality%
\begin{equation*}
  \varphi(\gamma(t)) \preceq (1 - t)\varphi(\gamma(0)) + t\varphi(\gamma(1)) \mbox{\quad for all\ } t \in [0,1]%
\end{equation*}
holds for any geodesic in $M$, we call $\varphi$ \emph{geodesically cone-convex}. It makes sense to extend this definition to functions on infinite-dimensional Riemannian manifolds, which are well-behaved in terms of their geodesics. Since this is certainly the case for the manifold $\MC_{\omega}$ with the $L^2$-metric, as Proposition \ref{prop_mcnu} shows, we can ask whether $\vec{\SC}_{f,\mu}$ is geodesically cone-convex. The following proposition shows that this is already the case for the integrand in the definition of $\vec{\SC}_{f,\mu}$.%

\begin{proposition}
For each $x \in M$, the function $g \mapsto \vec{\sigma}^g(\rmd f_x)$, $\MC_{\omega} \rightarrow \fa^+$, is geodesically cone-convex.
\end{proposition}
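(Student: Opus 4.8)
The plan is to push the statement, fiber by fiber, into the symmetric space $\SC^+_d$, reducing it to a convexity property of the vectorial distance $\vec d$ along geodesics, and then to prove that property by a compound‑matrix reduction followed by Hadamard's three‑lines theorem. Concretely, let $t\mapsto g^t$, $t\in[0,1]$, be a geodesic in $\MC_{\omega}$; by Corollary~\ref{cor_geodesics} it acts fiberwise as $g^t_x = g^0_x\#_t\,g^1_x$. Fixing charts around $x$ and $fx$ and using \eqref{eq_splus_isometries_geodesics} (in particular the identity $(p\#_t\,q)^{-1} = p^{-1}\#_t\,q^{-1}$, which absorbs the inversion built into $\beta_{\phi}$), the matrix representations $p_1^t := \beta_{\phi}^{-1}(g^t_x)$ and $p_2^t := \beta_{\psi}^{-1}(g^t_{fx})$ are geodesics in $\SC^+_d$. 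Lemma~\ref{lem_singvals} together with Proposition~\ref{prop_vecd}(i),(ii) — exactly the computation carried out in the proof of Proposition~\ref{prop_lmin_prop2} — gives $2\,\vec\sigma^{g^t}(\rmd f_x) = \vec d\bigl(p_2^t,\,A_x\ast p_1^t\bigr)$, where $A_x$ is the fixed local representation of $\rmd f_x$, and by \eqref{eq_splus_isometries_geodesics} the curve $t\mapsto A_x\ast p_1^t$ is again a geodesic. Since the order $\preceq$ is invariant under multiplication by $\tfrac12$, it suffices to prove
\begin{equation*}
  \vec d(u^t,v^t)\ \preceq\ (1-t)\,\vec d(u^0,v^0) + t\,\vec d(u^1,v^1)\qquad(t\in[0,1])
\end{equation*}
for any two geodesics $t\mapsto u^t$, $t\mapsto v^t$ in $\SC^+_d$.

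Unwinding the definition of $\preceq$, this means that $t\mapsto\sum_{i=1}^k[\vec d(u^t,v^t)]_i$ should be convex for $1\le k\le d-1$ and affine for $k=d$. Writing a geodesic as $u^t = (u^0)^{1/2}R^t(u^0)^{1/2}$ with $R := (u^0)^{-1/2}u^1(u^0)^{-1/2}$, one sees that $\log\det u^t$ is affine in $t$, so the case $k=d$ — where $\sum_i[\vec d(u^t,v^t)]_i = \log\det v^t - \log\det u^t$ — is immediate. For $1\le k\le d-1$ I would apply the $k$‑th compound map $\Lambda^k:\GL(d,\R)\to\GL\bigl(\tbinom{d}{k},\R\bigr)$: it is multiplicative, preserves positive‑definiteness and commutes with powers, hence maps geodesics of $\SC^+_d$ to geodesics of $\SC^+_{\binom{d}{k}}$; since $\alpha_1(\Lambda^k L) = \alpha_1(L)\cdots\alpha_k(L)$ and $\Lambda^k\bigl((u^t)^{-1/2}(v^t)^{1/2}\bigr) = (\Lambda^k u^t)^{-1/2}(\Lambda^k v^t)^{1/2}$, this yields $\sum_{i=1}^k[\vec d(u^t,v^t)]_i = [\vec d(\Lambda^k u^t,\Lambda^k v^t)]_1$. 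Thus everything reduces to proving that $t\mapsto[\vec d(u^t,v^t)]_1$ is convex for any two geodesics in $\SC^+_n$, for every $n$.

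For this last step, $[\vec d(u^t,v^t)]_1 = 2\log\alpha_1\bigl((u^t)^{-1/2}(v^t)^{1/2}\bigr)$. I would parametrize $u^t = a\,e^{tS}a\trn$ and $v^t = b\,e^{tT}b\trn$ with $a,b\in\GL(n,\R)$ and $S,T$ symmetric; then $a\,e^{tS/2}$ and $b\,e^{tT/2}$ are square‑root factors of $u^t$ and $v^t$, so $\alpha_1\bigl((u^t)^{-1/2}(v^t)^{1/2}\bigr) = \alpha_1\bigl(e^{-tS/2}Me^{tT/2}\bigr)$ with $M := a^{-1}b$. Consider the entire matrix‑valued function $G(z) := e^{-zS/2}Me^{zT/2}$. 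Because $S,T$ are real symmetric, $G(s+\mathrm{i}r) = e^{-\mathrm{i}rS/2}\,G(s)\,e^{\mathrm{i}rT/2}$ with $e^{-\mathrm{i}rS/2},e^{\mathrm{i}rT/2}$ unitary, hence $\alpha_1(G(s+\mathrm{i}r)) = \alpha_1(G(s))$ for all $r$; in particular $G$ is bounded on the strip $0\le\mathrm{Re}\,z\le1$. Hadamard's three‑lines theorem (applied to $G$) then gives that $s\mapsto\log\sup_{r\in\R}\alpha_1(G(s+\mathrm{i}r)) = \log\alpha_1(G(s))$ is convex on $[0,1]$, which is the desired claim. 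I expect this last step to be the crux: the earlier steps are essentially bookkeeping plus elementary facts about determinants and compound matrices, whereas the real point here is the observation that — after all the normalizations — the vertical translates of $G$ differ from it only by unitary conjugation, which is exactly what makes the maximum‑modulus/three‑lines principle output convexity. One should also verify that the constructions in the reduction to $\SC^+_d$ are chart‑independent, but this has already been arranged in Section~\ref{sec_bochi}.
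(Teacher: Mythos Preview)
Your proposal is correct. The fiberwise reduction you carry out---identifying $2\,\vec\sigma^{g^t}(\rmd f_x)$ with $\vec d(p_2^t,A_x\ast p_1^t)$ and then proving convexity of $\vec d$ along pairs of geodesics in $\SC_d^+$---is sound, and each step checks out: the square-root-factor trick (any factorization $P=LL\trn$ yields the same singular values for $L^{-1}K$ as for $P^{-1/2}Q^{1/2}$), the compound-matrix reduction, and the three-lines argument (the key point being that the unitary factors $e^{-irS/2}$, $e^{irT/2}$ leave the operator norm unchanged, so $\sup_r\alpha_1(G(s+ir))=\alpha_1(G(s))$) all work as you describe.

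The paper takes a shorter but less self-contained route: it reduces to the same fiberwise inequality, but only at the midpoint $t=\tfrac12$, and then outsources the proof of
\[
  \vec\sigma\bigl([q_0\#_{1/2}q_1]^{1/2}A_x[p_0\#_{1/2}p_1]^{-1/2}\bigr)\preceq \tfrac12\vec\sigma(q_0^{1/2}A_xp_0^{-1/2})+\tfrac12\vec\sigma(q_1^{1/2}A_xp_1^{-1/2})
\]
to \cite[Lem.~3.2]{KHG}, invoking continuity (Lemma~\ref{lem_continuity}) to pass from midpoint convexity to full convexity. Your approach is more work but fully self-contained and yields convexity for all $t$ directly; it also makes transparent the underlying mechanism (operator-norm log-convexity along complexified geodesics), which the paper hides behind a citation. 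The paper's route has the advantage of brevity and of isolating exactly the matrix inequality that does the work.
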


\begin{proof}
Let $g:[0,1] \rightarrow \MC_{\omega}$, $t \mapsto g^t$, be a geodesic segment. By Corollary \ref{cor_geodesics}, we have $g^t_x \equiv g^0_x \#_t\, g^1_x$. We prove the mid-point convexity of $g \mapsto \vec{\sigma}^g(\rmd f_x)$, i.e.%
\begin{equation}\label{eq_mp_conv_pointwise}
  \vec{\sigma}^{g^{\frac{1}{2}}}(\rmd f_x) \preceq \frac{1}{2}\vec{\sigma}^{g^0}(\rmd f_x) + \frac{1}{2}\vec{\sigma}^{g^1}(\rmd f_x) \mbox{\quad for all\ } x \in M.%
\end{equation}
To prove this, we choose charts $(\phi,U)$ and $(\psi,V)$ around $x$ and $fx$, respectively. Let $A_x$ be a local representation of $\rmd f_x$ with respect to this pair of charts. Further, let $p_0$ and $p_1$ ($q_0$ and $q_1$) represent $g^0_x$ and $g^1_x$ ($g^0_{fx}$ and $g^1_{fx}$), respectively. According to Lemma \ref{lem_singvals}, then \eqref{eq_mp_conv_pointwise} is equivalent to\footnote{Note that we are not using anymore the inversion of $p$ to define the local representation as in \eqref{eq_metric_matrix_rep}.}%
\begin{equation*}
  \vec{\sigma}([q_0 \#_{\frac{1}{2}}\, q_1]^{\frac{1}{2}}A_x [p_0 \#_{\frac{1}{2}}\, p_1]^{-\frac{1}{2}}) \preceq \frac{1}{2}\vec{\sigma}(q_0^{\frac{1}{2}} A_x p_0^{-\frac{1}{2}}) + \frac{1}{2}\vec{\sigma}(q_1^{\frac{1}{2}}A_xp_1^{-\frac{1}{2}}).%
\end{equation*}
A detailed proof of this inequality can be found in \cite[Lem.~3.2]{KHG}. The continuity, proven in Lemma \ref{lem_continuity}, together with the mid-point convexity implies convexity by a standard argument in convex analysis, see also \cite[Rem.~3.3]{KHG}.
\end{proof}

The main result of this section is the following theorem.%

\begin{theorem}\label{thm_convexity}
For every $\omega \in \VC$, the function $\vec{\SC}_{f,\mu}:\MC_{\omega} \rightarrow \fa^+$ is continuous and geodesically cone-convex.
\end{theorem}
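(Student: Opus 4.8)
The plan is to derive Theorem~\ref{thm_convexity} from the pointwise results already established, namely Lemma~\ref{lem_continuity} (continuity of $(g,x)\mapsto\vec\sigma^g(\rmd f_x)$ on $\MC\times M$) and the preceding Proposition (pointwise geodesic cone-convexity of $g\mapsto\vec\sigma^g(\rmd f_x)$ on $\MC_\omega$). The integrand is well-controlled, and both properties should pass through the integral $\int\cdot\,\rmd\mu(x)$; the only care needed is that the order $\preceq$ on $\fa^+$ is, componentwise in the partial sums, an order given by linear inequalities, so integration against a probability measure is compatible with it.

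First I would address \textbf{continuity} of $\vec{\SC}_{f,\mu}$ on $\MC_\omega$. Fix $g_0\in\MC_\omega$ and a sequence (or net, but sequences suffice since $\MC$ is metrizable in the $C^\infty$ topology) $g_n\to g_0$ in $\MC_\omega$. By Lemma~\ref{lem_continuity}, $\vec\sigma^{g_n}(\rmd f_x)\to\vec\sigma^{g_0}(\rmd f_x)$ for every $x$. To interchange limit and integral, I would invoke the dominated convergence theorem, using a bound of the type already used in \eqref{eq_alpha_bounds}: the singular values $\alpha_i^{g_n}(\rmd f_x)$ are bounded above and below, uniformly in $x$ and in $n$ large, because $g_n\to g_0$ uniformly on the compact manifold $M$ (convergence in $C^\infty$ implies $C^0$-convergence) and $\rmd f$ is continuous on compact $M$; hence $\|\vec\sigma^{g_n}(\rmd f_x)\|_\infty$ is dominated by a constant. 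Therefore $\vec{\SC}_{f,\mu}(g_n)=\int\vec\sigma^{g_n}(\rmd f_x)\,\rmd\mu(x)\to\int\vec\sigma^{g_0}(\rmd f_x)\,\rmd\mu(x)=\vec{\SC}_{f,\mu}(g_0)$, which gives continuity.

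Next, \textbf{geodesic cone-convexity}. Let $t\mapsto g^t$ be a geodesic in $\MC_\omega$ on $[0,1]$; by Corollary~\ref{cor_geodesics} it is given pointwise by $g^t_x=g^0_x\#_t\,g^1_x$. The preceding Proposition gives, for every fixed $x$,
\begin{equation*}
  \vec\sigma^{g^t}(\rmd f_x)\preceq (1-t)\,\vec\sigma^{g^0}(\rmd f_x)+t\,\vec\sigma^{g^1}(\rmd f_x),\qquad t\in[0,1].
\end{equation*}
Writing out the definition of $\preceq$ on $\fa^+$, this says that for each $k\in\{1,\dots,d-1\}$ the partial sum $\sum_{i=1}^k\log\alpha_i^{g^t}(\rmd f_x)$ is $\le (1-t)\sum_{i=1}^k\log\alpha_i^{g^0}(\rmd f_x)+t\sum_{i=1}^k\log\alpha_i^{g^1}(\rmd f_x)$, with equality for $k=d$ (the $k=d$ case being, as in Proposition~\ref{prop_lmin_prop1}, the statement $\sum_i\log\alpha_i^g(\rmd f_x)=\log|\mydet_g\rmd f_x|$ which is affine in $t$ along the geodesic since $\det$ of the $\#_t$-geodesic interpolates $\det$ log-affinely and the two volume forms agree). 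Integrating each of these scalar inequalities against the probability measure $\mu$ — legitimate because integration is monotone and linear, and all integrands are bounded by \eqref{eq_alpha_bounds}-type estimates — yields
\begin{equation*}
  \sum_{i=1}^k\int\log\alpha_i^{g^t}(\rmd f_x)\,\rmd\mu(x)\le(1-t)\sum_{i=1}^k\int\log\alpha_i^{g^0}(\rmd f_x)\,\rmd\mu(x)+t\sum_{i=1}^k\int\log\alpha_i^{g^1}(\rmd f_x)\,\rmd\mu(x)
\end{equation*}
for $k=1,\dots,d-1$, with equality for $k=d$. Since the $i$-th component of $\vec{\SC}_{f,\mu}(g)$ is exactly $\int\log\alpha_i^g(\rmd f_x)\,\rmd\mu(x)$ and the partial sums of $\vec{\SC}_{f,\mu}(g^t)$ are the integrals of the partial sums (finite sums commute with the integral), this is precisely $\vec{\SC}_{f,\mu}(g^t)\preceq(1-t)\vec{\SC}_{f,\mu}(g^0)+t\vec{\SC}_{f,\mu}(g^1)$, i.e.\ geodesic cone-convexity.

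The main obstacle — and it is a mild one — is bookkeeping rather than mathematics: one must be careful that the vector-valued relation $\preceq$ really is preserved under integration, which works precisely because $\preceq$ is defined through finitely many linear (in-)equalities on partial sums, and that the integrability/dominated-convergence hypotheses are in force; both follow from the uniform singular-value bounds established earlier in the paper. No genuinely new estimate is needed beyond what is already available.
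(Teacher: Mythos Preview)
Your proposal is correct and follows essentially the same approach as the paper: continuity from Lemma~\ref{lem_continuity} plus compactness of $M$ (you spell out the dominated-convergence step that the paper leaves implicit), and cone-convexity from the pointwise Proposition by integrating against $\mu$. The paper compresses the second step into the single remark that $\preceq$ is induced by a closed cone, whereas you unpack this explicitly via the partial-sum inequalities; both amount to the same observation that integration against a probability measure preserves the majorization order.
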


\begin{proof}
The continuity of $\vec{\SC}_{f,\mu}$ is a simple consequence of the continuity of $(g,x) \mapsto \vec{\sigma}^g(\rmd f_x)$ together with the compactness of $M$. The cone-convexity directly follows from the cone-convexity of the integrand together with the fact that the order $\preceq$ is induced by a closed cone.
\end{proof}

Observe that the geodesic cone-convexity of $\vec{\SC}_{f,\mu}$ is equivalent to the geodesic convexity of the functions%
\begin{equation*}
  g \mapsto s_{k,f,\mu}(g) := \int \log\omega_k^g(\rmd f_x)\, \rmd\mu(x),\quad k = 1,2,\ldots,d,%
\end{equation*}
where%
\begin{equation*}
  \omega_k^g(\rmd f_x) := \prod_{i=1}^k \log\alpha_i^g(\rmd f_x).%
\end{equation*}
Moreover, a minimizer of the vector-valued optimization problem is the same as a common minimizer of the scalar optimization problems%
\begin{equation*}
  \min_{g \in \MC_{\omega}}s_{k,f,\mu}(g),\quad k = 1,2,\ldots.d.%
\end{equation*}
In particular, we obtain the following corollary.%

\begin{corollary}
Let $f$ be a $C^{1+\alpha}$-diffeomorphism and assume that $\mu$ is an ergodic SRB measure. Let $k$ denote the number of positive Lyapunov exponents of $(M,f,\mu)$. Then, the measure-theoretic entropy $h_{\mu}(f)$ can be written as the solution to a geodesically convex optimization problem as follows:%
\begin{equation*}
  h_{\mu}(f) = \inf_{g \in \MC_{\omega}}s_{k,f,\mu}(g).%
\end{equation*}
\end{corollary}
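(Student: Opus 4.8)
The plan is to combine Bochi's approximation theorem (Theorem~\ref{thm_bochi}) with Pesin's entropy formula for SRB measures. The convexity assertion itself requires nothing new: as observed right after Theorem~\ref{thm_convexity}, $s_{k,f,\mu}$ is the $k$-th partial sum of the components of $\vec{\SC}_{f,\mu}$, and since $\vec{\SC}_{f,\mu}$ is geodesically cone-convex with respect to the majorization order (Theorem~\ref{thm_convexity}), every such partial sum is geodesically convex as a real-valued function on $\MC_{\omega}$. So the only thing left to establish is the identity $h_{\mu}(f) = \inf_{g \in \MC_{\omega}} s_{k,f,\mu}(g)$.

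First I would translate Theorem~\ref{thm_bochi} into a statement about partial sums. Write $\lambda_1(f) \geq \lambda_2(f) \geq \cdots \geq \lambda_d(f)$ for the integrated Lyapunov exponents listed with multiplicity, as in the proof of Proposition~\ref{prop_lmin_prop1}. Part~(i) says $\vec{\lambda}(f) \preceq \vec{\SC}_{f,\mu}(g)$, which by the definition of the majorization order means
\[
  s_{k,f,\mu}(g) \;\geq\; \lambda_1(f) + \cdots + \lambda_k(f) \qquad \text{for every } g \in \MC_{\omega},
\]
since $s_{k,f,\mu}(g)$ is, by definition, the $k$-th partial sum of $\vec{\SC}_{f,\mu}(g)$. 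Part~(ii) — together with \eqref{eq_sandwich}, which shows that the inequality $\vec{\lambda}(f) \preceq_w \vec{\SC}_{f,\mu}(g^{\ep}) \preceq_w \vec{\lambda}(f)+\ep\unit$ forces $\|\vec{\SC}_{f,\mu}(g^{\ep}) - \vec{\lambda}(f)\|_{\infty} \leq \ep$ — supplies, for every $\ep > 0$, a metric $g^{\ep} \in \MC_{\omega}$ with $s_{k,f,\mu}(g^{\ep}) \leq \lambda_1(f) + \cdots + \lambda_k(f) + k\ep$. Letting $\ep \to 0$ gives
\[
  \inf_{g \in \MC_{\omega}} s_{k,f,\mu}(g) \;=\; \lambda_1(f) + \cdots + \lambda_k(f).
\]

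Second I would identify the right-hand side with $h_{\mu}(f)$. Since $\mu$ is ergodic, the functions $\lambda_i(\cdot)$ are $\mu$-a.e.\ constant, so $\lambda_i(f)$ equals the common value $\lambda_i$ of $\lambda_i(\cdot)$, and the number $k$ of positive Lyapunov exponents (counted with multiplicity) is well-defined; as the $\lambda_i$ are arranged in decreasing order, $\lambda_1 \geq \cdots \geq \lambda_k > 0 \geq \lambda_{k+1} \geq \cdots \geq \lambda_d$, whence $\lambda_1(f) + \cdots + \lambda_k(f) = \sum_{i=1}^{d} \lambda_i^+$, the sum of the positive Lyapunov exponents with multiplicity. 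Finally, since $f$ is $C^2$ and $\mu$ is an ergodic SRB measure, the Pesin entropy formula — which holds with equality for SRB measures of $C^{1+\alpha}$ diffeomorphisms, by the work of Ledrappier--Strelcyn and Ledrappier--Young — gives $h_{\mu}(f) = \sum_{i=1}^{d} \lambda_i^+$. Chaining the three displays yields the claim.

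The only genuinely external ingredient, and the only place where more than $C^1$ regularity is used, is the entropy formula; one must be careful that the notion of SRB measure invoked is exactly the one for which Pesin's formula holds with equality (rather than merely Ruelle's inequality $h_{\mu}(f) \leq \sum_i \lambda_i^+$, which holds for all invariant measures). Everything else is bookkeeping: rewriting the two majorization estimates of Theorem~\ref{thm_bochi} as bounds on the $k$-th partial sum, and using ergodicity to pass from the integrated exponents $\lambda_i(f)$ to the a.e.\ constant values $\lambda_i$.
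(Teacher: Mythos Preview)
Your proposal is correct and follows essentially the same approach as the paper: invoke the Ledrappier--Young entropy formula to write $h_{\mu}(f) = \sum_{i=1}^k \lambda_i(f)$, then use the two halves of Theorem~\ref{thm_bochi} to identify this sum with $\inf_{g}s_{k,f,\mu}(g)$. Your write-up is more detailed than the paper's (which simply says ``Together with Theorem~\ref{thm_bochi}, this immediately implies the statement''); note also that the detour through \eqref{eq_sandwich} and the $\|\cdot\|_{\infty}$ bound is unnecessary, since the weak majorization $\vec{\SC}_{f,\mu}(g^{\ep}) \preceq_w \vec{\lambda}(f)+\ep\unit$ already gives the partial-sum bound $s_{k,f,\mu}(g^{\ep}) \leq \sum_{i=1}^k \lambda_i(f) + k\ep$ directly.
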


\begin{proof}
If $\mu$ is an ergodic measure, the measure-theoretic entropy of $f$ with respect to $\mu$ is given by%
\begin{equation*}
  h_{\mu}(f) = \sum_{i=1}^d \max\{0,\lambda_i(f)\} = \sum_{i=1}^k \lambda_i(f)%
\end{equation*}
according to the well-known entropy-characterization of SRB measures by Ledrappier and Young \cite{LYo} (an extension of Pesin's entropy formula). Together with Theorem \ref{thm_bochi}, this immediately implies the statement of the corollary.
\end{proof}

\begin{remark}
The corollary can probably be extended to the case when $f$ is only a local $C^{1+\alpha}$-diffeomorphism; see \cite{Mea} for a corresponding entropy formula, which does not assume invertibility of $f$.
\end{remark}

Convex optimization problems are more well-behaved than general (nonlinear) optimization problems in many respects. In particular, every local optimum is also a global optimum, the solution set is convex, there exist efficient numerical algorithms to solve them, and in the case of differentiability there exist first-order optimality criteria. However, the convex optimization problem considered in this paper still has a number of disadvantages which makes it hard to solve:%
\begin{itemize}
\item The existence of a minimizer is not guaranteed.%
\item It is vector-valued.%
\item It is defined on an infinite-dimensional space.%
\item It is defined on a Riemannian manifold (rather than a vector space).%
\item It cannot be expected to be smooth, since singular value functions are not smooth, in general.%
\end{itemize}
Despite these drawbacks, the optimization problem also has some properties which could turn out to be advantageous. For instance, we can prove that the functions $s_{k,f,\mu}$ satisfy global Lipschitz estimates on $\MC_{\omega}$ with respect to the $L^2$-metric.%

\begin{theorem}\label{thm_lipschitz_l2}
For each $k \in \{1,2,\ldots,d\}$, the function $s_{k,f,\mu}:\MC_{\omega} \rightarrow \R$ satisfies the global Lipschitz-like estimate%
\begin{equation}\label{eq_lip_ineq1}
  |s_{k,f,\mu}(g^1) - s_{k,f,\mu}(g^2)| \leq \sqrt{k}\left(\int d(g^1_x,g^2_x)^2\, \rmd\mu(x)\right)^{\frac{1}{2}},%
\end{equation}
where $d(\cdot,\cdot)$ is the distance function on $\SC^+_d$. If $\mu$ is a volume measure induced by some Riemannian metric $g^0 \in \MC_{\omega}$, then%
\begin{equation}\label{eq_lip_ineq2}
  |s_{k,f,\mu}(g^1) - s_{k,f,\mu}(g^2)| \leq \sqrt{k}\, d_{L^2}(g^1,g^2).%
\end{equation}
That is, $s_{k,f,\mu}$ satisfies a global Lipschitz estimate with respect to the $L^2$-distance.
\end{theorem}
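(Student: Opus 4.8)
Everything reduces to a pointwise Lipschitz estimate for the integrand of $s_{k,f,\mu}$. Write $\tau^g(x) := \sum_{i=1}^{k}\log\alpha^g_i(\rmd f_x)$, so that $s_{k,f,\mu}(g) = \int\tau^g(x)\,\rmd\mu(x)$, and let $\tau_k:\fa^+\to\R$ denote the top-$k$ partial-sum functional $\tau_k(\xi) = \xi_1 + \cdots + \xi_k$, which is linear and monotone for the order $\preceq$. The claim is that for all $x \in M$
\[
  |\tau^{g^1}(x) - \tau^{g^2}(x)| \;\le\; \tfrac{\sqrt{k}}{2}\bigl(d(g^1_x,g^2_x) + d(g^1_{fx},g^2_{fx})\bigr).
\]
To prove it, fix charts around $x$ and $fx$; let $A_x$ be the corresponding local representation of $\rmd f_x$ and $P_i,Q_i \in \SC^+_d$ those of $g^i_x, g^i_{fx}$ (as in the proof of midpoint convexity), so that $\tau^{g^i}(x) = \tau_k\bigl(\vec{\sigma}(Q_i^{1/2}A_x P_i^{-1/2})\bigr)$ by Lemma \ref{lem_singvals}. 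Factor
\[
  Q_2^{1/2}A_x P_2^{-1/2} \;=\; \bigl(Q_2^{1/2}Q_1^{-1/2}\bigr)\,\bigl(Q_1^{1/2}A_x P_1^{-1/2}\bigr)\,\bigl(P_1^{1/2}P_2^{-1/2}\bigr),
\]
apply Horn's inequality \eqref{eq_horn_inequality} twice, and use linearity and $\preceq$-monotonicity of $\tau_k$ to obtain $\tau^{g^2}(x) - \tau^{g^1}(x) \le \tau_k(\vec{\sigma}(Q_2^{1/2}Q_1^{-1/2})) + \tau_k(\vec{\sigma}(P_1^{1/2}P_2^{-1/2}))$. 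Since singular values are transpose-invariant, $\vec{\sigma}(Q_2^{1/2}Q_1^{-1/2}) = \vec{\sigma}(Q_1^{-1/2}Q_2^{1/2}) = \tfrac12\vec{d}(Q_1,Q_2)$ and $\vec{\sigma}(P_1^{1/2}P_2^{-1/2}) = \tfrac12\vec{d}(P_2,P_1)$; then $|\tau_k(\xi)| \le \sqrt{k}\,\|\xi\|_2$ by Cauchy--Schwarz, $\|\vec{d}(\cdot,\cdot)\|_2 = d(\cdot,\cdot)$ by Proposition \ref{prop_vecd}(i), and the fiber distances $d(g^i_x,g^i_{fx})$ are chart-independent. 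This yields the estimate for $\tau^{g^2}(x) - \tau^{g^1}(x)$; the reverse inequality follows by exchanging $g^1$ and $g^2$.

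Granting the pointwise estimate, \eqref{eq_lip_ineq1} is immediate: integrate against $\mu$, use the $f$-invariance of $\mu$ (so that $\int d(g^1_{fx},g^2_{fx})\,\rmd\mu(x) = \int d(g^1_x,g^2_x)\,\rmd\mu(x)$) to get $|s_{k,f,\mu}(g^1) - s_{k,f,\mu}(g^2)| \le \sqrt{k}\int d(g^1_x,g^2_x)\,\rmd\mu(x)$, and finally apply Cauchy--Schwarz with $\mu(M)=1$ to pass to the $L^2(\mu)$-bound.

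For \eqref{eq_lip_ineq2}, take $\mu = \omega_{g^0}$ with $g^0 \in \MC_{\omega}$; then the right-hand side of \eqref{eq_lip_ineq1} is $\sqrt{k}\bigl(\int d(g^1_x,g^2_x)^2\,\rmd\omega_{g^0}(x)\bigr)^{1/2}$, so it suffices to identify this integral with $d_{L^2}(g^1,g^2)^2$. By Corollary \ref{cor_geodesics} the $L^2$-geodesic in $\MC_{\omega}$ joining $g^1$ and $g^2$ is $t\mapsto g^t$ with $g^t_x = g^1_x \#_t\, g^2_x$; since $g^t \in \MC_{\omega}$ we have $\omega_{g^t} = \omega_{g^0}$, and since $t\mapsto g^1_x \#_t\, g^2_x$ is the constant-speed trace-metric geodesic of $\SC^+_d$ from $g^1_x$ to $g^2_x$ parametrized over $[0,1]$, its velocity has constant trace-norm $d(g^1_x,g^2_x)$. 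Hence the $L^2$-speed $\|\dot{g}^t\|_{g^t} = \bigl(\int d(g^1_x,g^2_x)^2\,\rmd\omega_{g^0}(x)\bigr)^{1/2}$ is constant in $t$, the $L^2$-arc length of this geodesic equals $\bigl(\int d(g^1_x,g^2_x)^2\,\rmd\omega_{g^0}\bigr)^{1/2}$, and since by Proposition \ref{prop_mcnu} the space $\MC_{\omega}$ is a geodesically complete symmetric space (in which geodesics are length-minimizing), this geodesic realizes $d_{L^2}(g^1,g^2)$. Thus $d_{L^2}(g^1,g^2) = \bigl(\int d(g^1_x,g^2_x)^2\,\rmd\omega_{g^0}\bigr)^{1/2}$ and \eqref{eq_lip_ineq2} follows.

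The main obstacle is the pointwise estimate: controlling the top-$k$ log-singular-value sum of a triple product needs two successive applications of Horn's inequality, and the resulting bound lives in $\fa^+$ with the majorization order, so one must convert it to a scalar estimate in terms of the fiber distances $d(\cdot,\cdot)$ --- it is precisely this conversion, via $\|\vec{d}\|_2 = d$ together with the Cauchy--Schwarz inequality $|\tau_k(\xi)|\le\sqrt{k}\,\|\xi\|_2$, that produces the constant $\sqrt{k}$. One must also check that the quantities $d(g^i_x,g^i_{fx})$ entering the estimate are genuinely chart-independent, but this is exactly the well-definedness already established for $\vec{d}(g^1_x,g^2_x)$ in Section \ref{sec_bochi}.
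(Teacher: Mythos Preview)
Your argument for \eqref{eq_lip_ineq1} is correct and in fact slightly sharper than the paper's: you establish the pointwise bound $|\tau^{g^1}(x)-\tau^{g^2}(x)|\le \tfrac{\sqrt{k}}{2}\bigl(d(g^1_x,g^2_x)+d(g^1_{fx},g^2_{fx})\bigr)$ directly via two applications of Horn's inequality and Cauchy--Schwarz, integrate to an $L^1(\mu)$ bound, and only then pass to $L^2(\mu)$. The paper proceeds differently: it first applies Jensen's inequality to obtain $(s_{k,f,\mu}(g^1)-s_{k,f,\mu}(g^2))^2\le\int\log^2\frac{\omega_k^{g^1}}{\omega_k^{g^2}}\,\rmd\mu$, then splits $M$ according to the sign of the log-ratio, uses Horn and the elementary inequality $(a+b)^2\le 2a^2+2b^2$, and finally the same Cauchy--Schwarz step $\log^2\omega_k(a^{-1/2}b^{1/2})\le\tfrac{k}{4}d(a,b)^2$. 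Your route is more elementary and yields the intermediate $L^1$ estimate $|s_{k,f,\mu}(g^1)-s_{k,f,\mu}(g^2)|\le\sqrt{k}\int d(g^1_x,g^2_x)\,\rmd\mu(x)$ as a bonus.

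There is, however, a genuine gap in your derivation of \eqref{eq_lip_ineq2}. You compute the $L^2$-arc length of the $\MC_\omega$-geodesic $t\mapsto g^t$ from Corollary \ref{cor_geodesics} and conclude that it ``realizes $d_{L^2}(g^1,g^2)$'' because $\MC_\omega$ is a geodesically complete symmetric space. But $d_{L^2}$ is the geodesic distance on $\MC$, and the paper explicitly notes that $\MC_\omega$ is \emph{not} totally geodesic in $\MC$; hence your curve need not be length-minimizing in $\MC$. The arc-length computation gives only $d_{L^2}(g^1,g^2)\le\bigl(\int d(g^1_x,g^2_x)^2\,\rmd\omega_{g^0}\bigr)^{1/2}$, which is the wrong direction for deducing \eqref{eq_lip_ineq2} from \eqref{eq_lip_ineq1}. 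Even if one interprets $d_{L^2}$ as the intrinsic distance on $\MC_\omega$, Proposition \ref{prop_mcnu} asserts geodesic completeness, not that geodesics are length-minimizing; in infinite-dimensional weak Riemannian manifolds this implication is delicate (cf.\ the paper's remark that metric completeness of $\MC_\omega$ is not known). The paper avoids this issue entirely by invoking Clarke's explicit formula \eqref{eq_l2_distance} for $d_{L^2}$ and checking that the fiber metric $d_x$ coincides with the trace-metric distance $d$ on the constant-determinant slice $\{p:\det p=\det g^0_x\}$, which is totally geodesic in $\SC^+_d$. That identification gives the equality $d_{L^2}(g^1,g^2)^2=\int d(g^1_x,g^2_x)^2\,\rmd\omega_{g^0}$ directly, without any appeal to minimizing properties of geodesics.
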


\begin{proof}
First, observe that by Jensen's inequality we have%
\begin{equation*}
  (s_{k,f,\mu}(g^1) - s_{k,f,\mu}(g^2))^2 \leq \int_M \log^2 \frac{\omega_k^{g^1}(\rmd f_x)}{\omega_k^{g^2}(\rmd f_x)}\, \rmd\mu(x).%
\end{equation*}
We split $M$ into the two subsets%
\begin{equation*}
  M_1 := \left\{ x \in M : \omega_k^{g^1}(\rmd f_x) > \omega_k^{g^2}(\rmd f_x) \right\},\quad M_2 := M \backslash M_1.%
\end{equation*}
Then we have the following identities and inequalities:%
\begin{align*}
& \int_{M_1} \log^2 \frac{\omega_k^{g^1}(\rmd f_x)}{\omega_k^{g^2}(\rmd f_x)}\, \rmd\mu(x) \\
&= \int_{M_1} \log^2 \frac{\omega_k( (g^1_{fx})^{-\frac{1}{2}} \rmd f_x (g^1_x)^{\frac{1}{2}} )}{\omega_k( (g^2_{fx})^{-\frac{1}{2}} \rmd f_x (g^2_x)^{\frac{1}{2}} )}\, \rmd\mu(x) \\
					&= \int \log^2 \frac{\omega_k( (g^1_{fx})^{-\frac{1}{2}} (g^2_{fx})^{\frac{1}{2}} (g^2_{fx})^{-\frac{1}{2}} \rmd f_x (g^2_x)^{\frac{1}{2}} (g^2_x)^{-\frac{1}{2}} (g^1_x)^{\frac{1}{2}})}{\omega_k( (g^2_{fx})^{-\frac{1}{2}}\rmd f_x (g^2_x)^{\frac{1}{2}} )}\, \rmd\mu(x) \\
&\leq \int_{M_1} \left( \log \omega_k( (g^1_{fx})^{-\frac{1}{2}} (g^2_{fx})^{\frac{1}{2}} ) + \log\omega_k( (g^2_x)^{-\frac{1}{2}} (g^1_x)^{\frac{1}{2}}) \right)^2 \rmd\mu(x) \\
&\leq 2\int_{M_1}\left(\log^2 \omega_k( (g^1_{fx})^{-\frac{1}{2}} (g^2_{fx})^{\frac{1}{2}} ) + \log^2\omega_k( (g^2_x)^{-\frac{1}{2}} (g^1_x)^{\frac{1}{2}})\right)\, \rmd \mu(x).%
\end{align*}
In the first equality, we simply use the definition of $s_{k,f,\mu}$. The second equality identifies $\rmd f_x$, $g^1_x$ and $g^2_x$ with their corresponding local representations and uses Lemma \ref{lem_singvals}. The third equality is trival. The subsequent inequality is Horn's inequality together with the fact that $\log^2(\cdot)$ is increasing on $(1,\infty)$. The last inequality simply uses that $(x + y)^2 \leq 2x^2 + 2y^2$ for any $x,y \in \R$.%

Now, observe that for any $a,b \in \SC^+_d$ we have%
\begin{align*}
  \log^2 \omega_k( a^{-\frac{1}{2}}b^{\frac{1}{2}} ) &= \left(\sum_{i=1}^k \log\alpha_i( a^{-\frac{1}{2}}b^{\frac{1}{2}} )\right)^2 \\
	&\leq k \sum_{i=1}^k \log^2\alpha_i( a^{-\frac{1}{2}}b^{\frac{1}{2}} ) \leq k \sum_{i=1}^d \log^2\alpha_i( a^{-\frac{1}{2}}b^{\frac{1}{2}} ) \\
	&= k \|\vec{\sigma}(a^{-\frac{1}{2}}b^{\frac{1}{2}} )\|_2^2 = \frac{k}{4} \|\vec{d}(a,b)\|_2^2 = \frac{k}{4} d(a,b)^2.%
\end{align*}
Altogether, we obtain the estimate%
\begin{equation*}
  \int_{M_1} \log^2 \frac{\omega_k^{g^1}(\rmd f_x)}{\omega_k^{g^2}(\rmd f_x)}\, \rmd\mu(x) \leq \frac{k}{2}\int_{M_1} \left(d(g^1_{fx},g^2_{fx})^2 + d(g^1_x,g^2_x)^2\right)\, \rmd\mu(x).%
\end{equation*}
In a similar fashion, we derive that%
\begin{equation*}
  \int_{M_2} \log^2 \frac{\omega_k^{g^1}(\rmd f_x)}{\omega_k^{g^2}(\rmd f_x)}\, \rmd\mu(x) \leq \frac{k}{2}\int_{M_2} \left(d(g^2_{fx},g^1_{fx})^2 + d(g^2_x,g^1_x)^2\right)\, \rmd\mu(x).%
\end{equation*}
Because of the symmetry of the distance and the $f$-invariance of $\mu$, summing the two integrals gives%
\begin{align*}
  (s_{k,f,\mu}(g^1) - s_{k,f,\mu}(g^2))^2 \leq k \int_M d(g^1_x,g^2_x)^2\, \rmd\mu(x),%
\end{align*}
which is equivalent to the claimed inequality \eqref{eq_lip_ineq1}.%

To prove \eqref{eq_lip_ineq2}, assume that $\mu$ is the volume measure induced by some $g^0 \in \MC_{\omega}$. According to \eqref{eq_lip_ineq1} and \eqref{eq_l2_distance}, it suffices to show that $d_x(g^1_x,g^2_x) = d(g^1_x,g^2_x)$ for all $g^1,g^2 \in \MC_{\omega}$ and $x \in M$. By definition, $d_x(\cdot,\cdot)$ is the distance function on $(S^0_2M)_x$ induced by the Riemannian metric%
\begin{equation*}
  \langle a,b \rangle_p = \tr(p^{-1}ap^{-1}b)\sqrt{\det((g^0_x)^{-1}p)},%
\end{equation*}
while $d(\cdot,\cdot)$ is the distance function induced by%
\begin{equation*}
  \langle a,b \rangle_p' = \tr(p^{-1}ap^{-1}b).%
\end{equation*}
We identify the space of inner products on $T_xM$ with $\SC_d^+$. The submanifold of $\SC_d^+$ consisting of all matrices with the same determinant $c > 0$ is totally geodesic. Hence, if $\gamma:[0,1] \rightarrow \SC^+_d$ is a geodesic segment connecting $g^1_x$ with $g^2_x$, then $\det(\gamma(t))$ is the same for all $t \in [0,1]$ and equals $\det(g^0_x)$. It follows that $\det((g^0_x)^{-1}\gamma(t)) = 1$ for all $t$, showing that $\langle \cdot,\cdot \rangle' = \langle \cdot,\cdot \rangle$, and hence $d_x(\cdot,\cdot) = d(\cdot,\cdot)$.
\end{proof}

\begin{remark}
The proven fact that $s_{k,f,\mu}$ is globally Lipschitz on $\MC_{\omega}$ if $\mu$ is a volume measure implies that there exists a unique extension of $s_{k,f,\mu}$ to the metric completion of $\MC_{\omega}$ which obeys the same Lipschitz estimate. The completion of $\MC$ was studied in \cite{Cl3} and, roughly speaking, characterized as the space of measurable metrics inducing a finite volume. In fact, it is isometric to the space of $L^2$-mappings from the manifold $M$ to the completion of the space of positive-definite symmetric matrices, as proven in \cite{Cav}.
\end{remark}

\section{Towards a first-order optimality criterion}\label{sec_first_order}

Since singular value functions do not depend smoothly on parameters, we cannot expect the function $\vec{\SC}_{f,\mu}$ (or $s_{k,f,\mu}$) to be differentiable. However, the geodesic convexity implies that unilateral directional derivatives exist (see \cite[Ch.~3, Thm.~4.2]{Udr}) and possibly we can compute some generalized derivative, e.g., a subdifferential. This would lead to a first-order criterion for optimality.%

In this section, we only take the first step and compute directional derivatives of the functions $s_{k,f,\mu}$. Moreover, we restrict ourselves to the simplest situation in which all singular values have multiplicity one at each point of the manifold. Although this assumption is not very realistic, it will give us some idea of what we can expect a subgradient of $s_{k,f,\mu}$ to look like.%

\begin{theorem}\label{thm_derivative}
Let $g \in \MC_{\omega}$ be a metric satisfying $\alpha_k^g(\rmd f_x) > \alpha_{k+1}^g(\rmd f_x)$ for all $x \in M$ and some $k \in \{1,2,\ldots,d-1\}$. Then, all directional derivatives of $s_{k,f,\mu}$ at $g$ exist. For any $h \in T_g\MC_{\omega}$, the directional derivative of $s_{k,f,\mu}$ at $g$ in direction $h$ is given by%
\begin{equation*}
  \partial_h s_{k,f,\mu}(g) = \int \langle Q^{\mathrm{l}}_{k,x} - Q^{\mathrm{r}}_{k,x}, h_x \rangle_{g_x}\, \rmd\mu(x),%
\end{equation*}
where $Q^{\mathrm{l}}_{k,x}$ ($Q^{\mathrm{r}}_{k,x}$) denotes the orthogonal projection in $(T_xM,g_x)$ onto the subspace spanned by the first $k$ left (right) singular vectors of $\rmd f_{f^{-1}x}$ ($\rmd f_x$).%
\end{theorem}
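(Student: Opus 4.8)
The plan is to reduce the computation to a pointwise and then purely matrix-analytic problem, and finally to differentiate under the integral sign. First I would fix a geodesic $t \mapsto g^t$ in $\MC_\omega$ emanating from $g$ in direction $h$; by Proposition \ref{prop_mcnu} and Corollary \ref{cor_geodesics} this is $g^t_x = g_x \exp(t g_x^{-1} h_x)$ in local representations, so the curve $t \mapsto s_{k,f,\mu}(g^t)$ is convex by Theorem \ref{thm_convexity}, hence its one-sided derivative at $t=0^+$ exists; that derivative is by definition $\partial_h s_{k,f,\mu}(g)$. The integrand of $s_{k,f,\mu}$ is $x \mapsto \sum_{i=1}^k \log\alpha_i^{g^t}(\rmd f_x)$, and by Lemma \ref{lem_singvals} this equals $\sum_{i=1}^k \log\alpha_i\big(q(t)^{-1/2} A_x p(t)^{1/2}\big)$ where $p(t),q(t)$ are the local matrix representations of $g^t$ at $x$ and $fx$ respectively and $A_x$ is the local representation of $\rmd f_x$. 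The hypothesis $\alpha_k^g(\rmd f_x) > \alpha_{k+1}^g(\rmd f_x)$ for all $x$ is exactly what makes the partial sum $\sum_{i=1}^k \log\alpha_i(\cdot)$ a smooth function of the matrix near $B_x := q(0)^{-1/2}A_x p(0)^{1/2}$, since it is the $k$-th largest eigenvalue gap of $B B^\top$ that must be avoided, and then $\prod_{i=1}^k \alpha_i^2$ is (up to the gap) a smooth function — more precisely, $\sum_{i=1}^k\log\alpha_i(C) = \tfrac12\log\det(P_k C C^\top P_k|_{\mathrm{ran}\,P_k})$ with $P_k$ the spectral projection, which is analytic where the gap persists.

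Next I would carry out the pointwise differentiation. Write $\phi_k(C) := \sum_{i=1}^k \log\alpha_i(C)$ for invertible $C$ with simple $k$-th singular gap. A clean way is to use the Rayleigh-type formula $\phi_k(C) = \tfrac12 \log\det\!\big(U_k^\top C C^\top U_k\big)$ where $U_k$ has as columns the top $k$ left singular vectors of $C$ (this holds because $U_k$ is the optimizer and by a first-order-stationarity / envelope argument the derivative of the variational expression with respect to $U_k$ drops out). Differentiating, $\rmD\phi_k(C)[\dot C] = \tr\big(P_k^{\mathrm l}\, \dot C\, C^\top (C C^\top)^{-1}\big)$ plus the symmetric counterpart, which simplifies to $\tr\big( C^\top (CC^\top)^{-1} P_k^{\mathrm l}\, \dot C\big)$; equivalently, using the SVD $C = \sum_j \alpha_j u_j v_j^\top$, one gets $\rmD\phi_k(C)[\dot C] = \sum_{j=1}^k \alpha_j^{-1} u_j^\top \dot C\, v_j$. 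Then I apply the chain rule: along the geodesic, $\dot p(0) = h_x$ and $\dot q(0) = h_{fx}$ in local representations (the derivative of $p(t) = p\exp(t p^{-1} h_x)$ at $0$ is $h_x$), and $C(t) = q(t)^{-1/2} A_x p(t)^{1/2}$, so $\dot C(0) = -\tfrac12 (\text{term in } h_{fx}) + \tfrac12(\text{term in } h_x)$ after differentiating the matrix square roots. Plugging into the formula for $\rmD\phi_k$ and re-expressing everything back in the intrinsic metric $g_x$, the left-singular-vector block at $fx$ pulls back under $\rmd f_x$ to become the first $k$ right singular vectors of $\rmd f_x$ at $x$, while the block at $x$ contributes the first $k$ left singular vectors of $\rmd f_{f^{-1}x}$ after an $f$-invariance change of variables in the integral; this is where the orthogonal projections $Q^{\mathrm r}_{k,x}$ and $Q^{\mathrm l}_{k,x}$ and the $\langle\cdot,\cdot\rangle_{g_x}$ pairing emerge, and one must be careful that the square-root-derivative terms recombine so that the $\alpha_j^{-1}$ weights cancel against the $\alpha_j$ coming from $C^\top(CC^\top)^{-1}$, leaving a bare projection.

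Finally I would justify the interchange of $\tfrac{\rmd}{\rmd t}|_{0^+}$ and $\int_M \cdot\, \rmd\mu$. The compactness of $M$ and continuity of $(g,x)\mapsto \vec\sigma^g(\rmd f_x)$ (Lemma \ref{lem_continuity}), together with the uniform gap hypothesis, give a uniform (in $x$) modulus of smoothness for $t\mapsto \phi_k(C(t))$ on a small interval, hence a uniform Lipschitz bound on the difference quotients, so dominated convergence applies and $\partial_h s_{k,f,\mu}(g) = \int_M \rmD\phi_k(B_x)[\dot C_x(0)]\,\rmd\mu(x)$, which by the above equals $\int \langle Q^{\mathrm l}_{k,x} - Q^{\mathrm r}_{k,x}, h_x\rangle_{g_x}\,\rmd\mu(x)$. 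I expect the main obstacle to be the bookkeeping in the pointwise step: correctly differentiating $q(t)^{-1/2}$ and $p(t)^{1/2}$ (the Fréchet derivative of the matrix square root is not simply $\tfrac12 p^{-1/2}\dot p$ unless things commute, so one should either work in the eigenbasis of $p$ or use the Daleckii–Krein formula), and then checking that all the non-commuting pieces conspire to collapse to the stated clean projection difference — in particular verifying that the trace-metric pairing $\langle\cdot,\cdot\rangle_{g_x}$ is exactly the pairing that makes $Q^{\mathrm l}_{k,x}-Q^{\mathrm r}_{k,x}$ the representing element, and that the relabelling $x \mapsto f^{-1}x$ under $\mu$-invariance turns the $fx$-contribution into the stated $\rmd f_{f^{-1}x}$ term. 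A secondary point to get right is the envelope/stationarity argument that lets us treat $U_k$ (equivalently $P_k^{\mathrm l}$) as constant to first order; this is standard perturbation theory for a simple spectral gap but should be stated explicitly.
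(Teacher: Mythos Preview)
Your strategy is the same as the paper's: localize via Lemma \ref{lem_singvals}, differentiate $\phi_k(C)=\sum_{i\le k}\log\alpha_i(C)$ at $C=q^{-1/2}A_xp^{1/2}$ using the spectral-gap hypothesis, push the chain rule through the square roots, identify the resulting rank-$k$ projections, and then justify differentiating under the integral together with the $\mu$-invariance relabelling.

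Two remarks. First, your bookkeeping in the middle paragraph is inverted: the $h_{fx}$-contribution (coming from $\dot q$) pairs with the \emph{left} singular vectors of $\rmd f_x$ and, after the change of variable $x\mapsto f^{-1}x$, becomes the $Q^{\mathrm l}_{k,x}$ term (left singular vectors of $\rmd f_{f^{-1}x}$); the $h_x$-contribution (from $\dot p$) pairs with the \emph{right} singular vectors of $\rmd f_x$ and gives $Q^{\mathrm r}_{k,x}$ directly, with no relabelling. You have these two roles swapped, though your closing ``obstacles'' paragraph gets it right again.

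Second, the paper handles the square-root derivative more cleanly than Daleckii--Krein or diagonalization: it writes $\rmd{\bf r}_p[h]=L_p^{-1}(h)$ with $L_p(X)=p^{1/2}X+Xp^{1/2}$, keeps $Z:=L_p^{-1}(h)$ symbolic, and then exploits that inside a trace against a \emph{symmetric} matrix $a$ one has $\tr[aZp^{-1/2}]=\tr[ap^{-1/2}Z]$, so $2\tr[aZp^{-1/2}]=\tr[a(Zp^{-1/2}+p^{-1/2}Z)]=\tr[a\,p^{-1/2}hp^{-1/2}]$. Since the singular-value gradient pairs $\dot C$ against the symmetric projections $\bar U\bar U^\top$ and $\bar V\bar V^\top$, the Lyapunov inverse disappears for free and the collapse to $\tfrac12\langle Q^{\mathrm l}-Q^{\mathrm r},h\rangle_g$ is immediate. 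This bypasses exactly the non-commutativity obstacle you flagged.
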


\begin{proof}
The proof is subdivided into six steps.%

\emph{Step 1}: Let us write $s_{k,x}(g) := \log\omega_k^g(\rmd f_x)$, which is the integrand in the definition of $s_{k,f,\mu}(g)$. We first try to differentiate $s_{k,x}$ in the hope that the order of differentiation and integration can be interchanged.  To this end, we choose a smooth curve $\gamma$ in $\MC_{\omega}$ with $\gamma(0) = g$, $\dot{\gamma}(0) = h$ and decompose the function $s_{k,x} \circ \gamma$ as%
\begin{equation*}
  (s_{k,x} \circ \gamma)(t) = \ell_k \circ \alpha \circ \zeta_x(t),%
\end{equation*}
where%
\begin{align*}
  \zeta_x(t) &= \gamma(t)_{fx}^{\frac{1}{2}} \rmd f_x \gamma(t)_x^{-\frac{1}{2}}, \quad \zeta_x:\R \rightarrow \GL(d,\R), \\
  \alpha(a) &:= (\alpha_1(a),\ldots,\alpha_d(a)), \quad \alpha:\GL(d,\R) \rightarrow \R^d, \\
  \ell_k(\xi) &:= \sum_{i=1}^k \log\hat{\xi}_i, \quad \ell_k:\R^d \rightarrow [-\infty,\infty).%
\end{align*}
Here, some explanation is necessary. In the definition of $\zeta_x(t)$, we treat $\gamma(t)_x$, $\gamma(t)_{fx}$ and $\rmd f_x$ as matrices, where we think of their local representations with respect to appropriate charts (with Lemma \ref{lem_singvals} in mind). The function $\alpha$ simply maps a matrix $a \in \GL(d,\R)$ to the vector of its ordinary singular values. Finally, for a vector $\xi = (\xi_1,\ldots,\xi_d) \in \R^d$, we write $\hat{\xi}_1 \geq \cdots \geq \hat{\xi}_d$ for the absolute values of its components, ordered from the largest to the smallest, and then define $\ell_k(\xi)$ as above. Here, the value $-\infty$ is possible if at least one component of $\xi$ is zero. Using Lemma \ref{lem_singvals}, we then see that $s_{k,x}$ can be decomposed in the suggested way (keeping in mind that we are not using anymore the inversion of $p$ to define the local representation as in \eqref{eq_metric_matrix_rep}).%

\emph{Step 2}: We compute the derivative of $\zeta_x$ at $t = 0$. By the product rule, we have%
\begin{equation*}
  \dot{\zeta}_x(t) = \left[\frac{\partial}{\partial t}\Bigl|_{t=0} \gamma(t)_{fx}^{\frac{1}{2}}\right] \rmd f_x \gamma(0)_x^{-\frac{1}{2}} + \gamma(0)_{fx}^{\frac{1}{2}} \rmd f_x \left[\frac{\partial}{\partial t}\Bigl|_{t=0} \gamma(t)_x^{-\frac{1}{2}}\right].%
\end{equation*}
With the notation ${\bf r}:p \mapsto p^{\frac{1}{2}}$ and ${\bf i}:p \mapsto p^{-1}$ (both defined on $\SC^+_d$), we can write this as%
\begin{equation*}
  \dot{\zeta}_x(t) = \left[\rmd{\bf r}_{\gamma(0)_{fx}}\dot{\gamma}(0)_{fx}\right] \rmd f_x \gamma(0)_x^{-\frac{1}{2}} + \gamma(0)_{fx}^{\frac{1}{2}} \rmd f_x \left[\rmd({\bf i} \circ {\bf r})_{\gamma(0)_x}\dot{\gamma}(0)_x\right].%
\end{equation*}
Using that $\rmd{\bf i}_ph = -p^{-1}hp^{-1}$ and $\rmd{\bf r}_ph = L_p^{-1}(h)$, where $L_p$ is the Lyapunov operator\footnote{Since $p^{\frac{1}{2}}$ has only strictly positive eigenvalues, the Lyapunov operator is invertible.}%
\begin{equation*}
  L_p(X) := p^{\frac{1}{2}}X + Xp^{\frac{1}{2}},%
\end{equation*}
we can finally write the derivative of $\zeta_x$ as%
\begin{equation}\label{eq_zeta_derivative}
  \dot{\zeta}_x(t) = L_{g_{fx}}^{-1}(h_{fx}) \rmd f_x g_x^{-\frac{1}{2}} - g_{fx}^{\frac{1}{2}} \rmd f_x g_x^{-\frac{1}{2}} L_{g_x}^{-1}(h_x)g_x^{-\frac{1}{2}}.%
\end{equation}
Observe that $X := L_{g_x}^{-1}(h_x)$ is a symmetric matrix, since it solves $g_x^{\frac{1}{2}}X + Xg_x^{\frac{1}{2}} = h_x$ and by symmetry of $p$ and $h_x$, also $X\trn$ solves this equation. Hence, $X = L_{g_x}^{-1}(h_x) = X\trn$.%

\emph{Step 3}: Now, we consider the composed function $\ell_k \circ \alpha$. According to \cite[Prop.~6.2]{LSe}, $\ell_k \circ \alpha$ is differentiable at $X \in \R^{d\tm d}$ if and only if $\ell_k$ is differentiable at $\alpha(X)$. The latter is the case if $\alpha_k(X) > \alpha_{k+1}(X)$, which holds in our case by assumption, since $\alpha_k(X) = \alpha_k(\zeta_x(0)) = \alpha_k^g(\rmd f_x)$. In this case,%
\begin{equation*}
  \nabla (\ell_k \circ \alpha)(X) = U\trn\Diag\left(\frac{1}{\alpha_1(X)},\ldots,\frac{1}{\alpha_k(X)},0,\ldots,0\right)V,%
\end{equation*}
where $X = U\trn\Diag(\alpha(X))V$ is a singular value decomposition.%

\emph{Step 4}: To compute the derivative of $s_{k,x} \circ \gamma$, we need to compose the two derivatives $\nabla(\ell_k \circ \alpha)(\zeta_x(0))$ and $\dot{\zeta}_x(0)$ in the proper way. This is the Euclidean inner product on $\R^{d \tm d}$, which is given by $\langle X,Y \rangle = \tr[X\trn Y]$ (also known as \emph{Frobenius inner product}). For brevity, let us write $S_x := \nabla(\ell_k \circ \alpha)(\zeta_x(0))$ and $Z_x := L_{g_x}^{-1}(h_x)$. Then%
\begin{align*}
 & \frac{\partial}{\partial t}\Bigl|_{t=0}(s_{k,x} \circ \gamma)(0) = \tr[S_x\trn\dot{\zeta}_x(0)] \\
	                       &\stackrel{\eqref{eq_zeta_derivative}}{=} \tr\left[S_x\trn \left(Z_{fx}\rmd f_x g_x^{-\frac{1}{2}} - g_{fx}^{\frac{1}{2}} \rmd f_x g_x^{-\frac{1}{2}} Z_x g_x^{-\frac{1}{2}}\right)\right] \\
												 &= \tr\left[S_x\trn Z_{fx}\rmd f_x g_x^{-\frac{1}{2}} \right] - \tr\left[S_x\trn g_{fx}^{\frac{1}{2}} \rmd f_x g_x^{-\frac{1}{2}} Z_x g_x^{-\frac{1}{2}} \right] \\
    										 &= \tr\left[Z_{fx}\rmd f_x g_x^{-\frac{1}{2}}S_x\trn\right] - \tr\left[Z_x g_x^{-\frac{1}{2}}S_x\trn g_{fx}^{\frac{1}{2}} \rmd f_x g_x^{-\frac{1}{2}}\right] \\
                         &= \tr\left[Z_{fx}g_{fx}^{-\frac{1}{2}}\zeta_x(0) S_x\trn\right] - \tr\left[Z_x g_x^{-\frac{1}{2}} S_x\trn \zeta_x(0)\right].%
\end{align*}
From the definition of $S_x$, it follows that%
\begin{align*}
  \zeta_x(0) S_x\trn = U_{\zeta_x(0)}\trn (I_{k \tm k} \oplus 0_{(d-k)\tm(d-k)}) U_{\zeta_x(0)},%
\end{align*}
where $\zeta_x(0) = U_{\zeta_x(0)}\trn \Diag(\alpha(\zeta_x(0))) V_{\zeta_x(0)}$ is the chosen singular value decomposition. Analogously,%
\begin{equation*}
  S_x\trn \zeta_x(0) = V_{\zeta_x(0)}\trn (I_{k \tm k} \oplus 0_{(d-k)\tm(d-k)}) V_{\zeta_x(0)}.%
\end{equation*}
We can write these matrices in a more compact way as $\bar{U}_{fx}\bar{U}_{fx}\trn$ and $\bar{V}_x\bar{V}_x\trn$, where $\bar{U}_{fx}$ is the $d \tm k$ matrix whose columns are the first $k$ left singular vectors of $\zeta_x(0)$, and $\bar{V}_x$ is defined in the same way via the right singular vectors. Observe that $\bar{U}_{fx}\bar{U}_{fx}\trn$ is the orthogonal projection onto the subspace spanned by the first $k$ left singular vectors of $\zeta_x(0)$ (analogously for $\bar{V}_x\bar{V}_x\trn$). We thus have%
\begin{equation*}
  \frac{\partial}{\partial t}\Bigl|_{t=0}(s_{k,x} \circ \gamma)(0) = \tr\left[Z_{fx}g_{fx}^{-\frac{1}{2}}\bar{U}_{fx}\bar{U}_{fx}\trn\right] - \tr\left[Z_x g_x^{-\frac{1}{2}}\bar{V}_x\bar{V}_x\trn\right].%
\end{equation*}
Now, recall that $g_x^{\frac{1}{2}}Z_x + Z_xg_x^{\frac{1}{2}} = h_x$, implying $Z_xg_x^{-\frac{1}{2}} + g_x^{-\frac{1}{2}}Z_x = g_x^{-\frac{1}{2}}h_x g_x^{-\frac{1}{2}}$. Observe that for arbitrary symmetric matrices $a,p,x,y$ with $px + xp = y$, we have $\tr[apx] = \tr[(apx)\trn] = \tr[xpa] = \tr[axp]$, and hence $\tr[ay] = \tr[a(px)] + \tr[a(xp)] = 2\tr[apx]$. This yields%
\begin{align}\label{eq_der_final}
\begin{split}
  &\frac{\partial}{\partial t}\Bigl|_{t=0}(s_{k,x} \circ \gamma)(0) = \frac{1}{2}\tr\left[g_{fx}^{-\frac{1}{2}} h_{fx} g_{fx}^{-\frac{1}{2}}\bar{U}_{fx}\bar{U}_{fx}\trn \right] - \frac{1}{2}\tr\left[g_x^{-\frac{1}{2}} h_x g_x^{-\frac{1}{2}}\bar{V}_x\bar{V}_x\trn\right] \\
	&= \frac{1}{2}\tr\left[g_{fx}^{\frac{1}{2}} g_{fx}^{-1} h_{fx} g_{fx}^{-1} g_{fx}^{\frac{1}{2}}\bar{U}_{fx}\bar{U}_{fx}\trn\right] - \frac{1}{2}\tr\left[g_x^{\frac{1}{2}} g_x^{-1} h_x g_x^{-1}g_x^{\frac{1}{2}}\bar{V}_x\bar{V}_x\trn\right] \\
	&= \frac{1}{2} \langle g_{fx}^{\frac{1}{2}}\bar{U}_{fx}\bar{U}_{fx}\trn g_{fx}^{\frac{1}{2}}, h_{fx} \rangle_{g_{fx}} - \frac{1}{2} \langle g_x^{\frac{1}{2}} \bar{V}_x\bar{V}_x\trn g_x^{\frac{1}{2}}, h_x \rangle_{g_x}.
\end{split}
\end{align}

\emph{Step 5}: We have to understand how the matrices $\bar{U}_{fx}\bar{U}_{fx}\trn$ and $\bar{V}_x\bar{V}_x\trn$ change, when we change the charts. Recall that $\bar{U}_{fx}$ is a matrix built from left singular vectors of $\zeta_x(0)$, i.e.~the eigenvectors of $\zeta_x(0)\zeta_x(0)\trn$. One can easily show that with respect to another pair of charts, the corresponding symmetric matrix becomes%
\begin{equation*}
  \tilde{\zeta}_x(0)\tilde{\zeta}_x(0)\trn = o \zeta_x(0)\zeta_x(0)\trn o\trn%
\end{equation*}
with an orthogonal matrix $o$ of the form%
\begin{equation*}
  o = (b \ast g_{fx})^{-\frac{1}{2}} b g_{fx}^{\frac{1}{2}},%
\end{equation*}
where $b$ is the derivative of the coordinate change at $fx$. Then, in the new pair of charts, $\bar{U}_{fx}\bar{U}_{fx}\trn$ becomes $o \bar{U}_{fx}\bar{U}_{fx}\trn o\trn$. Hence,%
\begin{equation*}
  (b \ast g_{fx})^{\frac{1}{2}} o\bar{U}_{fx}\bar{U}_{fx}\trn o\trn (b \ast g_{fx})^{\frac{1}{2}} = b g_{fx}^{\frac{1}{2}} \bar{U}_{fx}\bar{U}_{fx}\trn g_{fx}^{\frac{1}{2}} b\trn = b \ast g_{fx}^{\frac{1}{2}} \bar{U}_{fx}\bar{U}_{fx}\trn g_{fx}^{\frac{1}{2}}.%
\end{equation*}
This computation shows that $g_{fx}^{\frac{1}{2}} \bar{U}_{fx}\bar{U}_{fx}\trn g_{fx}^{\frac{1}{2}}$ is the local representation of a global section of $S^0_2M$, i.e.~a symmetric (not necessarily smooth) $(0,2)$-tensor field. The same can be shown for $g_x^{\frac{1}{2}}\bar{V}_x\bar{V}_x\trn g_x^{\frac{1}{2}}$. What are these global sections? Recall from the proof of Lemma \ref{lem_singvals} that a local representation of $\rmd f_x^* \rmd f_x$ is given by $p_x^{-1}A_x\trn p_{fx} A_x$, where $A_x$ is a local representation of $\rmd f_x$ and $p_x$ ($p_{fx}$) one of $g_x$ ($g_{fx}$). The eigenvectors $v_1,\ldots,v_d$ of $\rmd f_x^* \rmd f_x$ then have local representations $\tilde{v}_1,\ldots,\tilde{v}_d$, which are the eigenvectors of $p_x^{-1}A_x\trn p_{fx} A_x =: C_x$. The matrix $\zeta_x(0)\trn \zeta_x(0)$ is then given by $p_x^{\frac{1}{2}}C_xp_x^{-\frac{1}{2}}$, and hence its eigenvectors are $p_x^{\frac{1}{2}}\tilde{v}_1,\ldots,p_x^{\frac{1}{2}}\tilde{v}_d$ (which form an orthogonal basis with respect to the standard Euclidean inner product). If $Q_x$ is the orthogonal projection onto the linear subspace of $\R^d$ spanned by the first $k$ eigenvectors of $\zeta_x(0)\trn \zeta_x(0)$, then for $i = 1,\ldots,k$ we have $(p_x^{\frac{1}{2}} Q_x p_x^{\frac{1}{2}}) \tilde{v}_i = p_x \tilde{v}_i$ and for $i = k+1,\ldots,d$ we have $(p_x^{\frac{1}{2}} Q_x p_x^{\frac{1}{2}}) \tilde{v}_i = 0$. Hence,%
\begin{align*}
  \langle (p_x^{\frac{1}{2}} Q_x p_x^{\frac{1}{2}}) \tilde{v}_i, \tilde{v}_j \rangle = \left\{ \begin{array}{cc} \langle (p_x^{\frac{1}{2}} \tilde{v}_i), (p_x^{\frac{1}{2}}\tilde{v}_j) \rangle = \delta_{ij} & \mbox{for } i = 1,\ldots,k, \\
	0 & \mbox{otherwise}.
	\end{array}\right.%
\end{align*}
It follows that $g_{fx}^{\frac{1}{2}}\bar{U}_{fx}\bar{U}_{fx}\trn g_{fx}^{\frac{1}{2}}$ is the local representation of the orthogonal projection $Q^{\mathrm{l}}_{k,fx}$ onto the subspace of $T_{fx}M$ which is spanned by the first $k$ left singular vectors of $\rmd f_x$. Analogously, $g_x^{\frac{1}{2}}\bar{V}_x\bar{V}_x\trn g_x^{\frac{1}{2}}$ represents the orthogonal projection $Q^{\mathrm{r}}_{k,x}$ onto the subspace of $T_xM$ spanned by the first $k$ right singular vectors of $\rmd f_x$. Here, of course, the orthogonality is defined in terms of the metric $g = \gamma(0)$.%

\emph{Step 6}: Using the invariance of the measure $\mu$, we obtain from \eqref{eq_der_final} via integration that%
\begin{align*}
  \int \frac{\partial}{\partial t}\Bigl|_{t=0}(s_{k,x} \circ \gamma)(0)\, \rmd\mu(x) = \frac{1}{2} \int \langle Q^{\mathrm{l}}_{k,x} - Q^{\mathrm{r}}_{k,x}, h_x \rangle_{g_x}\, \rmd\mu(x).%
\end{align*}
It remains to prove that integration and derivative on the left-hand side can be interchanged. To show this, first observe that our assumption about the spectral gap carries over from $g$ to nearby metrics $\gamma(t)$, $|t| \leq \ep$, by Lemma \ref{lem_continuity}. Hence, the above formula holds for all $(t,x) \in [-\ep,\ep] \tm M$. Thus, the derivative of $s_{k,x} \circ \gamma$ at any $t \in [-\ep,\ep]$ has the same form as above, but the orthogonal projections are then defined in terms of $\gamma(t)$ instead of $\gamma(0)$:%
\begin{equation*}
  \frac{\partial}{\partial t}\Bigl|_{t=0}(s_{k,x} \circ \gamma)(t) = \frac{1}{2} \langle Q^{\mathrm{l}}_{k,x}(t) - Q^{\mathrm{r}}_{k,x}(t), \dot{\gamma}(t)_x \rangle_{\gamma(t)_x}.%
\end{equation*}
Here, we can estimate%
\begin{align*}
  &\left|\langle Q^{\mathrm{l}}_{k,x}(t) - Q^{\mathrm{r}}_{k,x}(t), \dot{\gamma}(t)_x \rangle_{\gamma(t)_x}\right| \\
		&\leq \left(\| Q^{\mathrm{l}}_{k,x}(t) \|_{\gamma(t)_x} + \| Q^{\mathrm{r}}_{k,x}(t) \|_{\gamma(t)_x}\right) \cdot \|\dot{\gamma}(t)_x\|_{\gamma(t)_x}.%
\end{align*}
The term $\|\dot{\gamma}(t)_x\|_{\gamma(t)_x}$ is continuous in $(t,x)$, and thus can be estimated by a constant $C$ on a compact set of the form $[-\ep,\ep] \tm M$. The orthogonal projections have norm $1$, hence we can estimate the complete term by a constant. We conclude that the Leibniz rule can be applied to interchange the order of integration and differentiation. The proof is complete.
\end{proof}

\begin{remark}
In the case when $\mu$ is a volume measure, coming from a metric which induces the volume form $\omega$, we would expect (in the setup of the theorem) that $s_{k,f,\mu}$ is differentiable at $g$ with gradient $x \mapsto Q^{\mathrm{l}}_{k,x} - Q^{\mathrm{r}}_{k,x}$ (which may be nonsmooth and live in some metric space completion). Hence, a vanishing gradient would mean that $Q^{\mathrm{l}}_{k,x} = Q^{\mathrm{r}}_{k,x}$, which is some sort of alignment of derivatives. If this holds for all $k$, it can also be interpreted as a symmetry property, since the left and right singular vectors of a real square matrix are the same if and only if the matrix is symmetric. This is only an analogy, however, because here we are not dealing with a single matrix but the composition of different linear operators (except at the fixed points of $f$). 
\end{remark}

\section{Summary and outlook}\label{sec_final}

We have shown that the vector of averaged Lyapunov exponents of a smooth measure-preserving system on a compact manifold $M$ is the infimum of a geodesically cone-convex function, defined on the space of smooth Riemannian metrics on $M$ which preserve a given volume form. Several research directions are conceivable to make this a fruitful approach for a better theoretical understanding or numerical computations. These include:%
\begin{enumerate}
\item[1.] \emph{Finding criteria for the existence of minimizers}; if such minimizers can be shown to exist only in the completion of $\MC_{\omega}$, an important question concerns their regularity.%
\item[2.] \emph{Computation of subgradients for the derivation of a first-order optimality criterion}. Maybe, in some cases it is possible to find an optimal metric by solving the equation defined by the first-order criterion.%
\item[3.] \emph{Finding ways to restrict the optimization problem to one or several finite-dimensional convex problems}. The approach for doing this that comes first to mind is probably restricting the function $\vec{\SC}_{f,\mu}$ to finite-dimensional totally geodesic submanifolds of $\MC_{\omega}$. However, it is very likely that there are none except for possibly low-dimensional ones. One way to find such submanifolds is to consider fixed point components of isometries. If we consider $\MC$ instead of $\MC_{\omega}$, we know that the diffeomorphism group of $M$ acts on $\MC$ by isometries. A fixed point of the action of one diffeomorphism $\varphi$ is a metric in which $\varphi$ is an isometry. So, one possible construction of a totally geodesic submanifold is%
\begin{equation*}
  S := \{ g \in \MC : f \in \mathrm{Isom}(M,g) \mbox{\ for all\ } f \in F \},%
\end{equation*}
where $F \subset \mathrm{Diff}(M)$ is any subset. As an example, consider a manifold $M$ which is a Lie group and let $F$ be the group of left translations. Then $S$ is the space of left-invariant metrics which is finite-dimensional and can be identified with $\SC^+_d$. In general, to obtain a finite-dimensional $S$ by the above construction, one would expect that the orbits $Fx$ all have to be dense in $M$. But then, any metric in $S$ is determined by its value at a single point. Another way to obtain a finite-dimensional problem is to start with low-dimensional totally geodesic submanifold, solve the problem there, then use the solution as the initial guess for another finite-dimensional problem (in some other submanifold) and continue like this in the hope that the process converges to a globally (nearly) optimal metric.
\end{enumerate}

\end{document}